\setlist[enumerate]{label=(\roman*)}
\newtheorem{theorem}{Theorem}
\newtheorem{lemma}[theorem]{Lemma}
\newtheorem{prop}[theorem]{Proposition}
\newtheorem{cor}[theorem]{Corollary}
\theoremstyle{definition}
\newtheorem{remark}[theorem]{Remark}
\newcommand \dm[1]  { \,\mathrm d{#1} }
\renewcommand{\epsilon}{\varepsilon}
\renewcommand{\phi}{\varphi}
\renewcommand{\leq}{\leqslant}
\renewcommand{\geq}{\geqslant}
\newcommand{\norm}[1]{\left\lVert#1\right\rVert}
\newcommand{\norml}[3]{\norm{#1}_{L^{#2}({#3})}}
\newcommand{\abs}[1]{\left\vert#1\right\vert}
\renewcommand{\hat}[1]{\widehat{#1}}
\newcommand{\e}{\mathrm{e}}
\renewcommand{\i}{\mathrm{i}}
\newcommand{\tpi}{2 \pi \mathrm{i}}
\newcommand{\R}{\mathbb{R}}
\newcommand{\N}{\mathbb{N}}
\newcommand{\Z}{\mathbb{Z}}
\newcommand{\T}{\mathbb{T}}
\renewcommand{\th}{\textsuperscript{th} }
\newcommand{\qtext}[1]{\quad\text{#1}\quad}
\newcommand{\qand}{\qtext{and}}
\newcommand{\qfa}{\qtext{for all}}
\newcommand{\ie}{i.e.\ }
\newcommand{\eg}{e.g.}
\title{The $L^p$ convergence of Fourier series on triangular domains}
\author{Ryan L. Acosta Babb%
\thanks{University of Warwick, UK
(\href{mailto:r.acosta-babb@warwick.ac.uk}{r.acosta-babb@warwick.ac.uk}).}}
\date{}
\begin{document}
\maketitle

\begin{abstract}
    We prove $L^p$ norm convergence for (appropriate truncations of) the Fourier series
    arising from the Dirichlet Laplacian eigenfunctions on three types of triangular domains in $\R^2$:
    (i) the $45$-$90$-$45$ triangle,
    (ii) the equilateral triangle and
    (iii) the hemiequilateral triangle (\ie half an equilateral triangle cut along its height).
    The limitations of our argument to these three types are discussed in light of Lam\'e's Theorem.
\end{abstract}

\section{Introduction}\label{sec:intro}

In one dimension, there is only one way to truncate the partial sums of a Fourier series \[
    \sum_{n=-\infty}^\infty \hat{f}(n)\e^{\tpi nx},
\]namely \[
    S_N(x) := \sum_{n=-N}^N \hat{f}(n)\e^{\tpi nx}.
\] Then the Fourier series converges if, and only if, $S_N(x)$ converges as $N\to\infty$.

When moving to higher dimensions, we have \[
    \sum_{m=-\infty}^\infty \sum_{n=-\infty}^\infty \hat{f}(m,n)\e^{\tpi (mx+ny)}
\]and an ambiguity arises.

Since the eigenvalue for $\e^{\tpi(mx+ny)}$ is proportional to $m^2+n^2$,
the ``natural choice'' of truncation for the partial sums is
\[
    \sum_{m^2+n^2\leq N^2}\hat{f}(m,n)\e^{2\pi\i (mx+ny)};
\]that is, we cut off the sum once we have picked out all eigenfunctions with eigenvalues
$\abs{\lambda_{m,n}}\lesssim N^2$.
(Geometrically, this procedure corresponds to using a ``circular cutoff'' in frequency space,
by choosing frequencies $(m,n)$ in the ball of radius $N$.)
A celebrated result of \cite{Fefferman1971} implies,
with the help of standard transference results \citep[Chap.\,4]{GrafakosCFA},
that such ``eigenvalue truncations" of Fourier expansions
fail, in general, to converge to $f$ in the $L^p(\T^2)$ norm when $p\neq 2$.

We may, instead, truncate according to the labelling of the indices (or frequencies) $(m,n)\in\Z^2$: \[
    \sum_{\abs{m},\abs{n}\leq N}\hat{f}(m,n)\e^{\tpi (mx+ny)}
\] Happily, these ``truncations by label" always converge back to $f$ in \emph{all} $L^p(\T^2)$ spaces
(provided that $1<p<\infty$).
\citep[See][for a proof of this classical result.]{GrafakosCFA}
Recently, \cite{RobinsonFefferman2021} have noted that the general problem of
finding ``well-behaved'' truncations of eigenfunction expansions in $L^p$-based spaces is still open
for more general bounded Euclidean domains.

A natural starting point is to consider the next ``simplest'' domains, such as discs or triangles.
The eigenfunctions of the disc are products of trigonometric and Bessel functions
and so share a similar product structure to the classical Fourier series on $\T^2$.
Specifically, the eigenfunctions are of the form \[
    \e^{\tpi \theta n}J_{\abs{n}}(j_m^{\abs{n}}r)
\] giving rise to the multidimensional Bessel--Fourier series
\begin{equation}\label{eqn:bfseries}
    \sum_{n=-\infty}^{\infty}\sum_{m=1}^\infty a_{m,n}\e^{\tpi n\theta}J_{\abs{n}}(j_m^{\abs{n}}r).
\end{equation}
(Here, $J_{\abs{n}}$ denotes a Bessel function of the first kind and $j_m^{\abs{n}}$ its non-negative zeros.)
The best result we were able to find in the literature is due to \cite{CordobaBalodis},
who proved norm convergence in the mixed norm space $L^p_{\mathrm{rad}}( L^2_{\mathrm{ang}} )$
defined by the condition\[
    \norm{f}_{p,2} := \left[\int_0^1\left(\sum_{k}\abs{f_k(r)}^2\right)^{p/2}r\dm{r}\right]^{1/p} <\infty,
\]where $f_k(r)$ is the $k$\th Fourier coefficient of the angular function $f(r,\cdot)$ for fixed $r$: \[
    f_k(r) := \int_{0}^{1}f(r,\theta)\e^{\tpi k\theta}\dm{\theta}.
\] By truncating the series \eqref{eqn:bfseries} in the ranges $\abs{n}\leq N$, $m\leq M$,
they were able to show that there is a constant $A>0$ such that the Bessel--Fourier series
of $f \in L^p_{\mathrm{rad}}( L^2_{\mathrm{ang}} )$ converges to $f$ in the $\norm{\cdot}_{p,2}$ norm
provided that $M\geq AN+1$ and $4/3 < p < 4$.
Furthermore, the endpoints for the range of $p$ are sharp.
\citep[See][Theorem 2, of which our discussion is a special case when $d=2$.]{CordobaBalodis}

By modifying their proof, we were able to improve the result to $L^p$ convergence with respect to the usual measure $r\dm{r}\dm{\theta}$
on the disc, provided that $2\leq p <4$ and \[
    \norm{f}_{p,q} := \left[\int_0^1\left(\sum_{k}\abs{f_k(r)}^q\right)^{p/q}r\dm{r}\right]^{1/p} <\infty,
    \qtext{where} \frac{1}{p} + \frac{1}{q} = 1.
\] As far as we know, the problem of $L^p$ convergence for functions $f\in L^p(r\dm{r}\dm{\theta})$
in the range $4/3<p<4$ is still open; see \cite{Acosta2022}.

We therefore turn our attention to triangular domains, which turn out to be much more amenable to analysis.
Following early work of \cite{Lame1833}, other authors such as \cite{Prager1998} and \cite{McCartin2003} have derived
explicit trigonometric expressions for Dirichlet eigenfunctions on the equilateral triangle.
Neither of these authors, however, consider questions of convergence for the associated eigenfunction series,
and subsequent work by \cite{Adcock2009}, \cite{Huybrechsetal2010}, and \cite{SunLi2005},
restricts attention to numerical methods or convergence in $L^2$-based spaces such as $H^k$.
\citep[See][for a comprehensive survey of the literature on the Laplacian and its eigenfunctions.]{Grebenkov2013}
To our knowledge, there is no treatment of the $L^p$ convergence of series of eigenfunctions on the triangle for $p\neq 2$.

In this paper we establish $L^p$ convergence for trigonometric series of eigenfunctions for the Dirichlet Laplacian
on three types of triangular domains:
(i) the $45$-$90$-$45$ triangle,
(ii) the equilateral triangle and
(iii) the hemiequilateral triangle (\ie half an equilateral triangle cut along its height).

Eigenfunctions for the later types, (ii) and (iii), were obtained in the abovementioned work of \cite{Prager1998} and \cite{McCartin2003}.
Using their insights, we prove that, on each of these domains, any $L^p$ function
can be written as a norm-convergent series of eigenfunctions.
We also give a complete treatment of these issues for the simpler case (i).
As far as we know, the result for type-(i) triangles is new.

In \cref{sec:isosceles} we derive the eigenfunctions for the type-(i) triangles,
prove their completeness and establish $L^p$ convergence of the associated series.
This serves as a useful prototype for the arguments we will develop in the following sections.

The result for equilateral domains is obtained by a decomposition of a function into
a symmetric and antisymmetric part with respect to the line $x=0$.
This reduces the problem to type-(iii) domains, which, once solved, easily yields the equilateral case.

Thus, in \cref{sec:assym,sec:symm} we separately derive the antisymmetric and symmetric modes,
following the ingenious approach of \cite{Prager1998} (see \cref{fig:t1arefl}).
The hemiequilateral triangle $T_1$ is tiled into the rectangle ${R=[0,\sqrt{3}]\times[0,1]}$,
where we exploit the classical $L^p$ convergence of ``double-sine'' and ``cosine-sine'' series on $R$
to derive convergence on $T_1$.
This is our main contribution, and is worked out in \cref{sec:assymconv,sec:symmconv}.
These results are combined in \cref{sec:results} to obtain $L^p$ convergence on the equilateral triangle.

In \cref{sec:conc} we conclude with some remarks about the limitations of this argument:
owing to a theorem of Lam\'e, the three domains (i)--(iii) listed above are the ``only'' ones amenable to this procedure.
\section{The 45-90-45 triangle}\label{sec:isosceles}

Taking our cue from Práger's analysis of the equilateral case,
we reflect the 45-90-45 triangle along the hypotenuse to obtain a square;
see \cref{fig:sqrefl}.

\begin{figure}[ht]
    \begin{center}
        \includegraphics{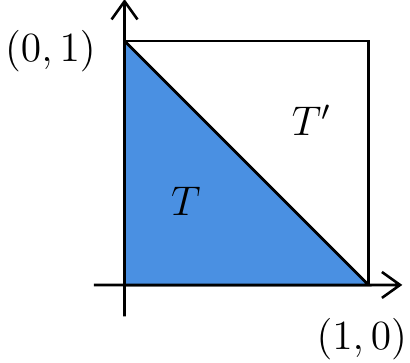}
        \caption{Reflecting $T$ into the square $[0,1]^2$ along the diagonal $y=1-x$.}
        \label{fig:sqrefl}
    \end{center}
\end{figure}

Placing the triangle, $T$, at vertices $(1,0)$, $(0,1)$ and $(0,0)$, the reflections have the simple from
\begin{equation*}
    x = 1 - \eta \qand y = 1-\xi \qtext{where} (\xi,\eta) \in T.
\end{equation*}
Given a function $f:T\to\R$ we define the \emph{prolongation} of $f$\[
    \mathcal{P}f(x,y) := \begin{cases}
        f(x,y) , &\text{if } (x,y)\in T;\\
        -f(1-y,1-x), &\text{if } (x,y)\in T';
    \end{cases}
\] where $T'$ denotes the reflected triangle, so that $[0,1]^2=T\cup T'$.

We now expand $\mathcal{P}f$ as a double-sine series on $[0,1]^2$:
\begin{align}\label{eqn:Pfcoef}
    \hat{\mathcal{P}f}(m,n) &:= 4\int_{0}^1\int_0^1 \mathcal{P}f(x,y)\sin(\pi mx)\sin(\pi ny)\dm{x}\dm{y}\nonumber\\
    &= 4\int_{T}f(\xi,\eta)\bigg[\sin(\pi m\xi)\sin(\pi n\eta)\nonumber\\
    &\qquad\quad {}-\sin(\pi m(1-\eta))\sin(\pi n(1-\xi))\bigg]\dm{\xi}\dm{\eta}\nonumber\\
    &= 4\int_{T}f(\xi,\eta)u_{m,n}(\xi,\eta)\dm{\xi}\dm{\eta},
\end{align}
where we defined the functions \[
    u_{m,n}(\xi,\eta) := \sin(\pi m\xi)\sin(\pi n\eta) - \sin(\pi m(1-\eta))\sin(\pi n(1-\xi))
\] for all points $(\xi,\eta)\in T$.
\begin{lemma}\label{lemma:umnsquare}
    We record the following facts about the functions $u_{m,n}$ (indexed by $m,n\in\N$).
    \begin{enumerate}
        \item For all $m\neq n$: \[
            u_{m,n} = \sin(\pi m \xi)\sin(\pi n\eta)-(-1)^{m+n}\sin(\pi n\xi)\sin(\pi m\eta),\]
            and clearly $u_{m,n}\equiv 0$ when $n=m$.
        \item Symmetry of the indices: \[
            u_{n,m} = -(-1)^{m+n}u_{m,n} \qfa m\neq n;
        \] therefore, it suffices to consider the set with $0 < m < n$.
        \item Letting $(x,y)$ range over $[0,1]^2$, \[
            \mathcal{P}\left[u_{m,n}|_T\right](x,y) = u_{m,n}(x,y) \qfa m\neq n.
        \]
        In other words: restricting $u_{m,n}$ from $[0,1]^2$ to $T$ and prolonging by $\mathcal{P}$ yields $u_{m,n}$ once more,
        as a function on $[0,1]^2$.
        \item The set $\{u_{m,n}:0<m<n\}$ is a complete, orthogonal set in $L^2(T)$ and \[
            \norml{u_{m,n}}{2}{T} = \frac{1}{2}.
        \]
    \end{enumerate}
\end{lemma}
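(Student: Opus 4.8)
Parts (i)--(iii) I would dispatch as direct computations, all resting on the single identity $\sin(\pi k(1-t)) = (-1)^{k+1}\sin(\pi k t)$ for $k\in\N$. For (i), I would substitute this into the second product in the definition of $u_{m,n}$ to get $\sin(\pi m(1-\eta))\sin(\pi n(1-\xi)) = (-1)^{m+n}\sin(\pi n\xi)\sin(\pi m\eta)$, which is the claimed formula, and setting $n=m$ then makes $u_{m,m}\equiv0$ visible. Part (ii) I would read off from the formula in (i) with $m$ and $n$ interchanged, factoring out $-(-1)^{m+n}$. For (iii), since $u_{m,n}$ is given by a formula valid on all of $[0,1]^2$, it suffices to check $u_{m,n}(1-y,1-x) = -u_{m,n}(x,y)$, which is immediate (the two products swap, with a sign); this says exactly that prolonging $u_{m,n}|_T$ returns $u_{m,n}$, that is, $u_{m,n}$ lies in the range of $\mathcal{P}$.

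The content is in (iv), and the plan is to transfer the problem to the square via $\mathcal{P}$. First I would note that $\mathcal{P}$ is, up to a constant, an isometry: the change of variables $(\xi,\eta)=(1-y,1-x)$ on $T'$ gives $\norml{\mathcal{P}f}{2}{[0,1]^2}^2 = 2\norml{f}{2}{T}^2$, and the same computation (all functions being real-valued) gives $\pair{\mathcal{P}f,\mathcal{P}g}_{L^2([0,1]^2)} = 2\pair{f,g}_{L^2(T)}$. Moreover the range of $\mathcal{P}$ is exactly the subspace $\mathcal{A} := \{\,g\in L^2([0,1]^2) : g(x,y) = -g(1-y,1-x)\text{ a.e.}\,\}$: that $\mathcal{P}f\in\mathcal{A}$ for every $f$ is immediate from the definition of $\mathcal{P}$ (the computation underlying (iii)), and conversely any $g\in\mathcal{A}$ equals $\mathcal{P}(g|_T)$. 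Hence $\tfrac1{\sqrt2}\mathcal{P}$ is a unitary isomorphism of $L^2(T)$ onto $\mathcal{A}$.

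Next I would resolve the orthonormal basis $\{e_{m,n}:=2\sin(\pi mx)\sin(\pi ny):m,n\geq1\}$ of $L^2([0,1]^2)$ under the reflection $\rho(x,y)=(1-y,1-x)$. The same sine identity gives $e_{m,n}\circ\rho = (-1)^{m+n}e_{n,m}$, so each diagonal term $e_{m,m}$ is $\rho$-symmetric, while for $m\neq n$ the plane $\operatorname{span}\{e_{m,n},e_{n,m}\}$ splits into a symmetric line spanned by $e_{m,n}+(-1)^{m+n}e_{n,m}$ and an antisymmetric line spanned by $e_{m,n}-(-1)^{m+n}e_{n,m} = 2u_{m,n}$. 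Thus $\mathcal{A}$ is the closed linear span of $\{u_{m,n}:m\neq n\}$; by part (ii) the ordered pairs $(m,n)$ and $(n,m)$ yield parallel functions, so the range $0<m<n$ lists each antisymmetric mode once, and since $\norml{u_{m,n}}{2}{[0,1]^2}^2 = \tfrac14+\tfrac14 = \tfrac12$ (using orthonormality of the $e_{k,l}$ and $m\neq n$), the family $\{\sqrt2\,u_{m,n}:0<m<n\}$ is an orthonormal basis of $\mathcal{A}$.

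Finally I would pull this basis back through the unitary $\tfrac1{\sqrt2}\mathcal{P}$. By (iii) we have $\mathcal{P}(u_{m,n}|_T)=u_{m,n}$, so the isometry identity gives $\norml{u_{m,n}}{2}{T}^2 = \tfrac12\norml{u_{m,n}}{2}{[0,1]^2}^2 = \tfrac14$, whence $\norml{u_{m,n}}{2}{T}=\tfrac12$; likewise, for distinct admissible pairs the functions $u_{m,n},u_{m',n'}$ are orthogonal in $\mathcal{A}$ and hence in $L^2(T)$. Since a unitary carries an orthonormal basis to an orthonormal basis and $\tfrac1{\sqrt2}\mathcal{P}$ sends $2u_{m,n}|_T\mapsto\sqrt2\,u_{m,n}$, the family $\{2u_{m,n}|_T:0<m<n\}$ is an orthonormal basis of $L^2(T)$; in particular $\{u_{m,n}:0<m<n\}$ is complete and orthogonal. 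The only real obstacle is the bookkeeping — making sure the index set $0<m<n$ enumerates each antisymmetric mode exactly once while the symmetric modes (the diagonals $e_{m,m}$ and the ``$+$'' combinations) drop out — which is precisely what parts (i)--(iii) are set up to handle.
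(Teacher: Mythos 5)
Your proposal is correct and follows the same essential route as the paper: transfer everything to the square via $\mathcal{P}$ and invoke the completeness and orthonormality of the double-sine system there. The paper phrases completeness dually (if $f\perp u_{m,n}$ for all $0<m<n$ then every Fourier coefficient of $\mathcal{P}f$ vanishes, so $\mathcal{P}f=0$ and hence $f=0$) and reads off orthogonality and the norm from the coefficient identity $\hat{\mathcal{P}f}(m,n)=4\int_T fu_{m,n}$, whereas you build an explicit orthonormal basis of the antisymmetric subspace and pull it back through the unitary $\tfrac{1}{\sqrt{2}}\mathcal{P}$ --- a slightly more structural packaging of the same facts.
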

\begin{proof}
    Statements (i)-(iii) are easily verified. For the completeness claim in (iv), suppose that $f\in L^2(T)$ is such that \[
        \int_T fu_{m,n} = 0 \qfa 0 < m < n.
    \] It follows from (i), (iii) and \eqref{eqn:Pfcoef} that the Fourier coefficients of $\mathcal{P}f$ on $[0,1]^2$ vanish,
    so $\mathcal{P}f=0$ in $L^2([0,1]^2)$, and therefore $f=\mathcal{P}f|_{T}=0$.
    This proves the completeness of the set.

    From (iii) and \eqref{eqn:Pfcoef}, it is clear that \[
        \int_T u_{m,n}u_{k,l}  = \int_0^1\int_0^1 u_{m,n}(x,y)\sin(\pi kx)\sin(\pi ly)\dm{x}\dm{y}.
    \] Ordering the indices as $0<m<n$ and $0<k<l$, it follows from the formula in (i) that
    the functions $u_{m,n}$ and $u_{k,l}$ are pairwise orthogonal; setting $m=k$ and $n=l$ easily yields
    the norm of $u_{m,n}$.
\end{proof}

\begin{cor}
    The function $u_{m,n}$ with $0<m<n$ is an eigenfunction of (minus) the Dirichlet Laplacian on $T$
    with eigenvalue $\pi^2(m^2+n^2)$.
\end{cor}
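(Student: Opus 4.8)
The plan is to verify directly that each $u_{m,n}$ with $0<m<n$ satisfies the eigenvalue equation $-\Delta u_{m,n} = \pi^2(m^2+n^2)u_{m,n}$ on $T$ together with the Dirichlet boundary condition $u_{m,n}=0$ on $\partial T$. The eigenvalue computation is immediate: each of the two terms in the defining formula for $u_{m,n}$ (either the original $\sin(\pi m\xi)\sin(\pi n\eta)-\sin(\pi m(1-\eta))\sin(\pi n(1-\xi))$ or the cleaner expression from \cref{lemma:umnsquare}(i)) is a product of sines whose arguments are linear in $\xi$ and $\eta$, so applying $-\Delta = -\partial_\xi^2-\partial_\eta^2$ pulls out a factor $\pi^2 m^2 + \pi^2 n^2$ from each summand, hence from $u_{m,n}$.

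The substantive point is the boundary condition on the three sides of $T$, which (with the vertex placement $(0,0)$, $(1,0)$, $(0,1)$) are the segments $\xi=0$, $\eta=0$, and the hypotenuse $\xi+\eta=1$. On $\xi=0$ the first term $\sin(\pi m\xi)\sin(\pi n\eta)$ vanishes because $\sin 0 = 0$, and the second term $\sin(\pi m(1-\eta))\sin(\pi n(1-\xi)) = \sin(\pi m(1-\eta))\sin(\pi n)$ vanishes because $n\in\N$; similarly on $\eta=0$. On the hypotenuse $\eta=1-\xi$ we substitute into the original formula: the first term becomes $\sin(\pi m\xi)\sin(\pi n(1-\xi))$ and the second becomes $\sin(\pi m\xi)\sin(\pi n(1-\xi))$ as well — that is, the two reflection-related products coincide exactly on the diagonal, since $(1-\eta,1-\xi)=(\xi,1-\xi)$ there. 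Hence $u_{m,n}\equiv 0$ on $\xi+\eta=1$. This is precisely the reflection antisymmetry built into the prolongation $\mathcal{P}$, and it is the only step that requires a moment's care.

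Finally I would note that $u_{m,n}$ is nontrivial — it is not the zero function, since $m\neq n$ and \cref{lemma:umnsquare}(iv) gives $\norml{u_{m,n}}{2}{T}=\tfrac12\neq 0$ — and it is smooth, so it is a genuine (classical) Dirichlet eigenfunction with the stated eigenvalue. I expect no real obstacle here; the corollary is essentially a direct computation, and the only thing to get right is the cancellation on the hypotenuse, which follows transparently from the definition of $u_{m,n}$ as a difference of a product and its reflection across $y=1-x$.
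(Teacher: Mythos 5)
Your proposal is correct and follows essentially the same route as the paper: a direct computation of $-\Delta u_{m,n}$ on the two sine products, vanishing on the legs from $\sin(\pi k\cdot 0)=\sin(\pi k)=0$, and the cancellation of the two reflection-related terms on the hypotenuse $\xi+\eta=1$. The only cosmetic difference is that the paper verifies the hypotenuse condition via the rewritten formula in \cref{lemma:umnsquare}(i) and cites the vanishing on $\partial[0,1]^2$ for the legs, whereas you work directly from the original definition; both are immediate.
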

\begin{proof}
    It is immediate from (i) and (iii) in \cref{lemma:umnsquare} that \[
        -\Delta u_{m,n}(x,y) = \pi^2(m^2+n^2)u_{m,n}(x,y)
    \] pointwise for all $(x,y)\in [0,1]^2$, and $u_{m,n}\equiv 0$ on the boundary of the square.
    Thus, restricting to $T\subset [0,1]^2$ yields eigenfunctions of the Laplacian on $T$
    that clearly vanish along the edges parallel to the axes.
    It remains to show that $u_{m,n}$ also vanishes on the hypotenuse $y=1-x$,
    but this is immediately verified on substitution into the formula from (i) above.
\end{proof}

We thus have a complete, orthogonal set of eigenfunctions for the Dirichlet Laplacian on $T$.
We can define the Fourier coefficients as usual: \[
    f^{\triangle}(m,n) := \frac{1}{\norml{u_{m,n}}{2}{T}^2}\int_T fu_{m,n} = \hat{\mathcal{P}f}(m,n);
\] the last equality holds by \cref{eqn:Pfcoef}.

The partial sum operators, truncated by indices, for the Fourier series on $T$ and $[0,1]^2$ are
\begin{align*}
    S_N^Tf &:= \sum_{0<m<n\leq N} f^{\triangle}(m,n)u_{m,n}, \qtext{and}\\
    S_N^{[0,1]^2} &:= \sum_{0<m,n\leq N}\hat{\mathcal{P}f}(m,n)\sin(\pi mx)\sin(\pi ny),
\end{align*}respectively.

The next proposition tells us that the prolongation operation commutes with the Fourier partial sums,
allowing us to``push forward" the $L^p$ convergence results from the square to the triangle.

\begin{prop}\label{prop:PScommuteSquare}
    Let $f\in L^p(T)$. Then: \[
        \mathcal{P}\left[S_N^Tf\right](x,y) = S_N^{[0,1]^2}\left[\mathcal{P}f\right](x,y)
        \qfa (x,y)\in[0,1]^2. 
    \]
\end{prop}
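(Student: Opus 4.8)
The plan is to start from the square partial sum $S_N^{[0,1]^2}[\mathcal{P}f]$ and transform it, using only the symmetry and prolongation identities collected in \cref{lemma:umnsquare}, until it visibly coincides with $\mathcal{P}[S_N^Tf]$; everything in sight is a finite sum, so no convergence issue ever arises. First I would note that the diagonal terms drop out: since $u_{m,m}\equiv 0$ on $T$, formula \eqref{eqn:Pfcoef} gives $\hat{\mathcal{P}f}(m,m) = 4\int_T f u_{m,m} = 0$. Hence
\[
    S_N^{[0,1]^2}[\mathcal{P}f](x,y) = \sum_{\substack{0<m,n\leq N\\ m\neq n}}\hat{\mathcal{P}f}(m,n)\sin(\pi mx)\sin(\pi ny),
\]
and I would split this into the ranges $0<m<n\leq N$ and $0<n<m\leq N$.

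In the second range, relabel the indices $m\leftrightarrow n$ and invoke part (ii): since $u_{n,m} = -(-1)^{m+n}u_{m,n}$, \eqref{eqn:Pfcoef} yields $\hat{\mathcal{P}f}(n,m) = 4\int_T f u_{n,m} = -(-1)^{m+n}\hat{\mathcal{P}f}(m,n)$. Substituting and combining the two pieces turns the square partial sum into
\[
    \sum_{0<m<n\leq N}\hat{\mathcal{P}f}(m,n)\bigl[\sin(\pi mx)\sin(\pi ny) - (-1)^{m+n}\sin(\pi nx)\sin(\pi my)\bigr].
\]
The crucial observation is that the bracketed expression is exactly $u_{m,n}(x,y)$: the closed formula in part (i) is a trigonometric identity valid for every $(x,y)$, so in particular it describes $u_{m,n}$ as a function on all of $[0,1]^2$ (consistently with part (iii), which says this extension is nothing but $\mathcal{P}[u_{m,n}|_T]$). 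Thus $S_N^{[0,1]^2}[\mathcal{P}f] = \sum_{0<m<n\leq N}\hat{\mathcal{P}f}(m,n)\,u_{m,n}$ on $[0,1]^2$.

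It remains only to identify the right-hand side with $\mathcal{P}[S_N^Tf]$. By definition $S_N^Tf = \sum_{0<m<n\leq N}f^{\triangle}(m,n)\,u_{m,n}|_T$ with $f^{\triangle}(m,n) = \hat{\mathcal{P}f}(m,n)$; since $\mathcal{P}$ is linear and $\mathcal{P}[u_{m,n}|_T] = u_{m,n}$ on $[0,1]^2$ by part (iii), we get $\mathcal{P}[S_N^Tf] = \sum_{0<m<n\leq N}\hat{\mathcal{P}f}(m,n)\,u_{m,n}$ on $[0,1]^2$, which is precisely the expression just obtained. The only place any real care is needed is the bookkeeping in the index swap and the tracking of the sign $(-1)^{m+n}$ --- that is the whole of the ``difficulty,'' and once past it the identity is immediate.
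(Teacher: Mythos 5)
Your proposal is correct and follows essentially the same route as the paper's own proof: split the square partial sum over the two off-diagonal regions $0<m<n\leq N$ and $0<n<m\leq N$, use the index-swap identity of \cref{lemma:umnsquare}(ii) to fold the second region into the first, recognise the resulting bracket as $u_{m,n}$ via part (i), and finish with part (iii) together with the linearity of $\mathcal{P}$ and the identity $f^{\triangle}(m,n)=\hat{\mathcal{P}f}(m,n)$. The only difference is cosmetic --- you make the relation $\hat{\mathcal{P}f}(n,m)=-(-1)^{m+n}\hat{\mathcal{P}f}(m,n)$ explicit where the paper leaves it implicit.
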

\begin{proof}
    Break up the lattice $[1,N]^2\cap \N^2$ into the following regions:
    \[
        I: 0 < m < n \leq N \qand II: 0 < n < m \leq N.
    \] (Recall that, by (i) in \cref{lemma:umnsquare}, the diagonal $m=n$ yields no eigenfunctions.)
    Hence:
    \begin{align*}
        S_N^{[0,1]^2}\left[\mathcal{P}f\right](x,y) &=\sum_{I}\hat{\mathcal{P}f}(m,n)\sin(\pi mx)\sin(\pi ny)\\
        &\qquad \quad{}+ \sum_{II}\hat{\mathcal{P}f}(m,n)\sin(\pi mx)\sin(\pi ny)\\
        &=\sum_{I}\hat{\mathcal{P}f}(m,n)\bigg[\sin(\pi mx)\sin(\pi ny)\\
        &\qquad \quad{}-(-1)^{m+n}\sin(\pi nx)\sin(\pi my)\bigg] &\text{\cref{lemma:umnsquare} (ii)}\\
        &= \sum_{I}\hat{\mathcal{P}f}(m,n)u_{m,n}(x,y) &\text{\cref{lemma:umnsquare} (i)}\\
        &= \sum_{I}\hat{\mathcal{P}f}(m,n)\mathcal{P}u_{m,n}(x,y) &\text{\cref{lemma:umnsquare} (iii)}\\
        &= \mathcal{P}\left[S_N^Tf\right](x,y),
    \end{align*}
    where in the last line we make use of the linearity of $\mathcal{P}$ and the identity
    $f^{\triangle}(m,n)=\hat{\mathcal{P}f}(m,n)$ for all $(m,n)$ in $I$.
\end{proof}

We now have everything we need to prove our result.

\begin{theorem}
    Let $f\in L^p(T)$ with $1<p<\infty$. Then, $S_N^Tf\to f$ in $L^p(T)$.
\end{theorem}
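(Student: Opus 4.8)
The plan is to transfer the classical $L^p$ convergence of double-sine series on the square $[0,1]^2$ over to the triangle $T$ by conjugating with the prolongation operator $\mathcal{P}$, using \cref{prop:PScommuteSquare} to interchange $\mathcal{P}$ with the truncated partial sums. Crucially, \cref{prop:PScommuteSquare} is valid for \emph{every} $f\in L^p(T)$, so no density argument is needed on the triangle side: the only genuinely analytic input will be the convergence of double-sine series on the square.

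First I would check that $\mathcal{P}$ and restriction behave well on $L^p$. The map $(x,y)\mapsto(1-y,1-x)$ is a measure-preserving involution of $[0,1]^2$ carrying $T$ onto $T'$, so a change of variables in the definition of $\mathcal{P}$ yields $\norml{\mathcal{P}f}{p}{[0,1]^2}^p = 2\norml{f}{p}{T}^p$; in particular $\mathcal{P}\colon L^p(T)\to L^p([0,1]^2)$ is bounded for all $1\leq p<\infty$, so $\mathcal{P}f\in L^p([0,1]^2)$ whenever $f\in L^p(T)$. Conversely, $g\mapsto g|_T$ is norm-decreasing from $L^p([0,1]^2)$ onto $L^p(T)$, and by construction $\mathcal{P}g|_T = g$.

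Next I would invoke the classical fact that for $1<p<\infty$ and any $g\in L^p([0,1]^2)$ one has $S_N^{[0,1]^2}g\to g$ in $L^p([0,1]^2)$. As usual this reduces to the uniform boundedness of the operators $S_N^{[0,1]^2}$ on $L^p$, convergence then following from the density of finite sine-polynomials. The uniform bound is obtained by factoring $S_N^{[0,1]^2}$ as the composition of the one-dimensional sine partial-sum operators acting separately in the $x$- and $y$-variables (equivalently, by passing to odd extensions and invoking the classical $L^p(\T^2)$ result for rectangular partial sums), each factor being uniformly bounded on $L^p$ by the boundedness of the conjugate function/Riesz projection, and then iterating the one-variable estimates via Fubini. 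I expect this to be the only step requiring any care to state cleanly, though it is entirely standard.

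Finally I would assemble the pieces. Given $f\in L^p(T)$, \cref{prop:PScommuteSquare} gives $\mathcal{P}\left[S_N^Tf\right]=S_N^{[0,1]^2}\left[\mathcal{P}f\right]$ on $[0,1]^2$. Since $\mathcal{P}f\in L^p([0,1]^2)$, the previous step gives $S_N^{[0,1]^2}\left[\mathcal{P}f\right]\to\mathcal{P}f$ in $L^p([0,1]^2)$. Restricting to $T$ and using that restriction is norm-decreasing, we get $S_N^Tf = \mathcal{P}\left[S_N^Tf\right]\big|_T \to \mathcal{P}f\big|_T = f$ in $L^p(T)$, which is the claim. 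The potential obstacle is purely the reliance on the square-case convergence; once that is granted, everything else is bookkeeping with $\mathcal{P}$.
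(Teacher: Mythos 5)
Your proposal is correct and follows essentially the same route as the paper: reduce to the square via the norm identity $\norml{\mathcal{P}f}{p}{[0,1]^2}^p = 2\norml{f}{p}{T}^p$, commute $\mathcal{P}$ with the partial sums using \cref{prop:PScommuteSquare}, and invoke the classical $L^p$ convergence of double-sine series on $[0,1]^2$. The only cosmetic difference is that you pass back to $T$ via the norm-decreasing restriction map rather than the exact factor-of-$2$ identity, which changes nothing.
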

\begin{proof}
    Note that, for $f\in L^p$, we have \[
        \norml{\mathcal{P}f}{p}{[0,1]^2}^p = 2\norml{f}{p}{T}^p.
    \] Thus, by successively applying this equality and \cref{prop:PScommuteSquare}, we have
    \begin{align*}
        \norml{S_N^Tf-f}{p}{T}^p &= \frac{1}{2}\norml{\mathcal{P}\left[S_N^Tf\right]-\mathcal{P}f}{p}{[0,1]^2}^p\\
        &= \frac{1}{2}\norml{S_N^{[0,1]^2}\mathcal{P}f-\mathcal{P}f}{p}{[0,1]^2}^p \to 0,
    \end{align*}
    by the $L^p$ convergence of Fourier double-sine series on $[0,1]^2$.
\end{proof}
\section{Antisymmetric eigenfunctions on the equilateral triangle}\label{sec:assym}

To tackle convergence on the equilateral triangle $T$, we divide it along its midline $x=0$
into two congruent hemiequilateral triangles, and call the right-hand one $T_1$
(see \cref{fig:t1arefl}).
Any function $f:T\to \R$ may be decomposed into a symmetric and antisymmetric part
with respect to $x$, \ie $f=f_a+f_s$ where \[
    f_a(x,y) := \frac{f(x,y)-f(-x,y)}{2} \qand
    f_s(x,y) := \frac{f(x,y)+f(-x,y)}{2}
\] for all $(x,y)\in T$.
Clearly each of $f_a$ and $f_s$ is determined by its values on $T_1\subset T$.

Let us begin with the $L^p$ convergence of the Fourier series of $f_a$ on the triangle $T_1$.
Denote by $(\xi,\eta)$ the coordinates of a point in $T_1$.
Then, for each coordinate pair $(x,y)$ in the rectangle $R=[0,\sqrt{3}]\times[0,1]$,
we write \cite[following][p.\,312]{Prager1998} \[
    (x,y) = (x_i(\xi,\eta),y_i(\xi,\eta)) \qtext{for exactly one} (x_i,y_i)\in T_i.
\] Note that the $i$ subscript serves just as a reminder that the point $(x_i,y_i)$
lies in the region $T_i$ of the Cartesian plane.

The \emph{antisymmetric prolongation} of $u\colon T_1\to\R$ to the rectangle $R$ is defined by
\[
    \mathcal{P}_a u(x,y) := \begin{cases}
        u(\xi,\eta) &\text{if } (x,y) = (x_1,y_1)\in T_1,\\
        -u(\xi,\eta) &\text{if } (x,y) = (x_2,y_2)\in T_2,\\
        u(\xi,\eta) &\text{if } (x,y) = (x_3,y_3)\in T_3,\\
        u(\xi,\eta) &\text{if } (x,y) = (x_4,y_4)\in T_4,\\
        -u(\xi,\eta) &\text{if } (x,y) = (x_5,y_5)\in T_5,\\
        u(\xi,\eta) &\text{if } (x,y) = (x_6,y_6)\in T_6.\\
    \end{cases}
\]

\begin{figure}[ht]
    \begin{center}
        \includegraphics{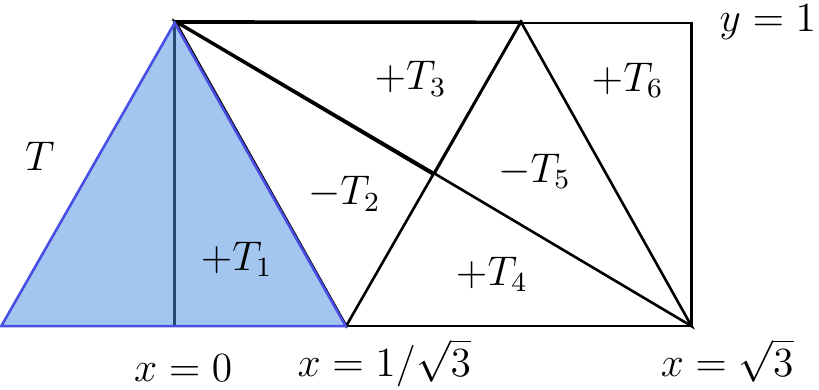}
        \caption{The equilateral triangle $T$ (shaded), the right-angled triangle $T_1$
        and six congruent copies $T_i$ arranged into the rectangle $R=[0,\sqrt{3}]\times[0,1]$.
        The ``$\pm$'' signs indicate a symmetric or antisymmetric reflection, respectively, in the definition of $\mathcal{P}_a$.
        Assuming zero boundary conditions on $T_1$, the extension by $\mathcal{P}_a$ vanishes on all lines draw in $R$
        and its boundary.
        See \cite{Prager1998}.}
        \label{fig:t1arefl}
    \end{center}
\end{figure}

More succinctly (albeit less precisely) we express this relation as
\begin{equation}\label{eqn:prolong}
    \mathcal{P}_a u(x,y) = \sum_{i=1}^6c_i u(\xi(x_i,y_i),\eta(x_i,y_i)),
\end{equation} where $c_i$ are the appropriate ``$\pm$'' signs.

From \cref{fig:t1arefl} it is clear that the prolongation $\mathcal{P}_au$ vanishes along the sides of the rectangle $R$.
We therefore express $\mathcal{P}_af_a$ as a double-sine series on $R$:
\[
    \sum_{m=1}^\infty\sum_{n=1}^\infty\hat{\mathcal{P}_af_a}(m,n)\sin\left(\frac{\pi mx}{\sqrt{3}}\right)\sin\left(\pi ny\right).
\] The Fourier coefficients of $\mathcal{P}_af_a$ are:
\begin{align}\label{eqn:Pafacoef}
    \hat{\mathcal{P}_af_a}(m,n) &= \frac{4}{\sqrt{3}}\int_{R}\mathcal{P}_af_a(x,y)\sin\left(\frac{\pi mx}{\sqrt{3}}\right)\sin\left(\pi ny\right)\dm{x}\dm{y}\nonumber\\
        &=\frac{4}{\sqrt{3}}\int_{T_1}f_a(\xi,\eta)\left[\sum_{i=1}c_i\sin\left(\frac{\pi mx_i}{\sqrt{3}}\right)\sin\left(\pi ny_i\right)\right]\dm{\xi}\dm{\eta}
        && \text{by } \eqref{eqn:prolong}\nonumber\\
        &= \frac{4}{{\sqrt{3}}}\int_{T_1}f_a(\xi,\eta)u_{m,n}(\xi,\eta)\dm{\xi}\dm{\eta},
\end{align}
where we defined the antisymmetric eigenfunctions as\[
    u_{m,n}(\xi,\eta) := \sum_{i=1}c_i\sin\left(\frac{\pi mx_i}{\sqrt{3}}\right)\sin\left(\pi ny_i\right).
\]
(Recall that each pair $(x_i,y_i)\in R$ is a function of $(\xi,\eta)\in T_1$.)

By explicit computation using the parametrisations of the $T_i$, Práger obtained the following.
\begin{lemma}\label{lemma:umnmult}
    For integers $0<m<n$ with the same parity (\ie both odd or both even), \begin{align}\label{eqn:umn}
        u_{m,n}(x,y) &= 2\sin\left(\frac{\pi m x}{\sqrt{3}}\right)\sin\left(\pi n y\right)\nonumber\\
        &\quad-2(-1)^{(m+n)/2}\sin\left(\frac{\pi x}{2\sqrt{3}}(m+3n)\right)\sin\left(\frac{\pi y}{2}(m-n)\right)\\
        &\quad+2(-1)^{(m-n)/2}\sin\left(\frac{\pi x}{2\sqrt{3}}(m-3n)\right)\sin\left(\frac{\pi y}{2}(m+n)\right)\nonumber.
    \end{align}
    Furthermore, $u_{m,n}\equiv 0$ whenever $n$ and $m$ have different parity \emph{or} $n=m$ \emph{or} $m=3n$.
    Finally, with ${0<m<n}$ as above, define the pairs \begin{align*}
        \left\{\begin{aligned}
            m' &= \tfrac{1}{2}(3n-m),\\
            n' &= \tfrac{1}{2}(m+n);
       \end{aligned}\right.\qand
        &\left\{\begin{aligned}
            m'' &= \tfrac{1}{2}(m+3n),\\
            n'' &= \tfrac{1}{2}(m-n).
        \end{aligned}\right.
    \end{align*}
    Then, \begin{equation}\label{eqn:unmmult}
        u_{m',n'} = -(-1)^{(m-n)/2}u_{m,n} \qand u_{m'',n''} = (-1)^{(m+n)/2}u_{m,n}.
    \end{equation}
\end{lemma}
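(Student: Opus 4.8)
The plan is to obtain all of \cref{lemma:umnmult} from one master computation: expand each of the six terms $c_i \sin(\pi m x_i/\sqrt 3)\sin(\pi n y_i)$ using the explicit parametrisations of the six triangles $T_i$ inside $R$, and then collect terms. First I would record the six maps $(\xi,\eta)\mapsto(x_i,y_i)$ from Práger's tiling (these are rigid motions: rotations by multiples of $\pi/3$ about suitable vertices together with the reflections carrying the $\pm$ signs $c_i$), so that each $x_i$ and $y_i$ is an explicit affine function of $(\xi,\eta)$. Substituting these into $\sin(\pi m x_i/\sqrt 3)\sin(\pi n y_i)$ and applying the product-to-sum identity $2\sin A\sin B = \cos(A-B)-\cos(A+B)$ turns every summand into a sum of two cosines whose arguments are linear in $(\xi,\eta)$ with coefficients built from $m$, $n$ and the lattice $\{1,3\}$ (the $\sqrt 3$ scaling in $x$ is what produces the combinations $m\pm 3n$). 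The key algebraic fact driving the collapse is that the twelve cosine terms pair up: for generic $(\xi,\eta)$, the arguments coincide in pairs up to sign, and the accompanying signs $c_i$ and the phase shifts coming from evaluating $\sin$ at translated arguments (which is where the factors $(-1)^{(m\pm n)/2}$ enter, using that $m,n$ have the same parity so $(m\pm n)/2\in\Z$) either reinforce or cancel. After this bookkeeping, converting back from cosines to products of sines via $\cos(A-B)-\cos(A+B)=2\sin A\sin B$ yields exactly the three-term expression \eqref{eqn:umn}.

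Once \eqref{eqn:umn} is in hand, the degeneracy claims are immediate: if $m,n$ have opposite parity the pairing of cosine terms fails (the relevant phase is a half-integer multiple of $\pi$, killing the terms), giving $u_{m,n}\equiv 0$; and if $n=m$ or $m=3n$ then one of the three sine products in \eqref{eqn:umn} has a vanishing frequency $\sin(0\cdot(\text{--}))\equiv 0$ while the remaining two cancel against each other after using the sign relations — in fact these are special cases of the index symmetry about to be proved. For the symmetry relations \eqref{eqn:unmmult}, the cleanest route is to feed the substitutions $(m,n)\mapsto(m',n')$ and $(m,n)\mapsto(m'',n'')$ directly into \eqref{eqn:umn}. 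Note $m'+n' = 2n$, $m'-3n' = -2m$, and $3n'-m' = m$, etc., so each of the three frequency-pairs appearing in $u_{m',n'}$ is, up to sign inside a $\sin$, one of the three frequency-pairs of $u_{m,n}$; matching them term by term and tracking the parity factors $(-1)^{(m'\pm n')/2}$ against $(-1)^{(m\pm n)/2}$ produces the overall scalar $-(-1)^{(m-n)/2}$, and similarly $(-1)^{(m+n)/2}$ for the double-primed pair. One should also check that $0<m<n$ with equal parity implies $m',n'$ (resp. $m'',n''$) are again nonnegative integers of equal parity, so the relation lives in the indexing set under consideration — here $n''=\tfrac12(m-n)<0$, which is harmless since $\sin$ is odd and simply re-expresses $u_{m'',n''}$ with the order of indices swapped.

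The main obstacle is purely organisational rather than conceptual: correctly transcribing Práger's six parametrisations (orientations and base points of the $T_i$) and then not losing a sign among the roughly twelve cosine terms during the collapse. A robust way to guard against errors is to verify the final formula \eqref{eqn:umn} independently — it must satisfy $-\Delta u_{m,n} = \tfrac{\pi^2}{3}(m^2+\text{(appropriate)})\,u_{m,n}$ with the correct Dirichlet eigenvalue and must vanish on $\partial T_1$ (the two legs and the hypotenuse), which one can check by direct substitution; agreement of eigenvalues across the three sine products forces the frequency combinations to be exactly $\{(m,3n),(m+3n,\,n-m),(m-3n,\,m+n)\}$ up to the $\tfrac{1}{2\sqrt3}$, $\tfrac12$ scalings, pinning down \eqref{eqn:umn} up to the three constant coefficients, which the master computation then supplies. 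With \eqref{eqn:umn} validated this way, \eqref{eqn:unmmult} and the degeneracies follow by the short substitutions described above.
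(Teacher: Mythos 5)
Your plan reproduces essentially the route the paper takes: the paper offers no proof of this lemma at all, attributing the ``explicit computation using the parametrisations of the $T_i$'' to Pr\'ager (1998), and your master computation (product-to-sum on the six reflected terms, then re-pairing the cosines) followed by direct substitution of $(m',n')$ and $(m'',n'')$ into \eqref{eqn:umn} is precisely that computation; the only slip is $3n'-m'=2m$, not $m$, which is immaterial since the frequency matching uses $m'$, $m'\pm 3n'$ and $m'\pm n'$. The one point worth making explicit is that the closed form \eqref{eqn:umn} must be established for all integer pairs of equal parity, not merely $0<m<n$, because $(m',n')$ and $(m'',n'')$ leave that range (indeed $n''<0$) --- which your remark about the oddness of $\sin$ correctly anticipates.
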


The next technical result is all that stands between us
and the $L^p$ convergence of a ``$u_{m,n}$''-expansion of $f_a$ on $T_1$.
\begin{lemma}
    The functions $\{u_{m,n} : 0 < m < n \text{ and } m \equiv n \mod 2\}$ are pairwise orthogonal.
    Furthermore, \begin{equation}\label{eqn:Pumn}
        \mathcal{P}_a u_{m,n}(x,y) = u_{m,n}(x,y) \qfa (x,y)\in R,
    \end{equation} from which it follows that
    \begin{equation}\label{eqn:umnnorm}
        \norml{u_{m,n}}{2}{T_1}^2 = \frac{\sqrt{3}}{2}.
    \end{equation}
\end{lemma}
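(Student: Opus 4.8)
The plan is to follow the same template that worked for the $45$-$90$-$45$ triangle: use the explicit formula \eqref{eqn:umn} together with the classical orthogonality of the double-sine system $\{\sin(\pi mx/\sqrt3)\sin(\pi ny)\}$ on $R$, and transfer everything through the prolongation $\mathcal{P}_a$. First I would prove \eqref{eqn:Pumn}. By definition $u_{m,n}$ was built on $T_1$ out of the six reflected copies, so on each $T_i$ the function $\mathcal{P}_a u_{m,n}$ equals $c_i u_{m,n}(\xi,\eta)$ where $(\xi,\eta)$ is the pre-image of the point in $T_1$; the claim is that this agrees with the single trigonometric expression \eqref{eqn:umn} evaluated at $(x,y)\in R$. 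This is the analogue of part (iii) of \cref{lemma:umnsquare} and is checked the same way Práger's formula itself is derived: each of the three product terms in \eqref{eqn:umn} is invariant (up to the sign bookkeeping already encoded in \eqref{eqn:unmmult}) under the reflections generating the $T_i$, so summing $c_i u_{m,n}\circ(\text{reflection})_i$ reproduces \eqref{eqn:umn}. Concretely, the pairs $(m',n')$ and $(m'',n'')$ in \cref{lemma:umnmult} are exactly the frequencies appearing in the second and third terms of \eqref{eqn:umn}, so the identities \eqref{eqn:unmmult} are precisely the statement that the reflected copies of the first term produce the other two (with the stated signs), which is what makes the prolongation close up. I expect this to be the main obstacle: it is a finite but somewhat fiddly case check over the six congruent triangles and the corresponding sign pattern $c_i$, and one has to be careful that the "different parity / $n=m$ / $m=3n$" degeneracies (where $u_{m,n}\equiv 0$) are handled consistently.

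Granting \eqref{eqn:Pumn}, orthogonality is immediate. For $0<m<n$, $0<k<l$ with the appropriate parity constraints, \eqref{eqn:Pafacoef} applied to $f_a=u_{m,n}$ together with \eqref{eqn:Pumn} gives
\[
    \int_{T_1} u_{m,n}u_{k,l}
    = \frac{\sqrt3}{4}\,\hat{\mathcal{P}_a u_{m,n}}(k,l)
    = \frac{\sqrt3}{4}\cdot\frac{4}{\sqrt3}\int_R u_{m,n}(x,y)\sin\!\Big(\frac{\pi k x}{\sqrt3}\Big)\sin(\pi l y)\dm x\dm y,
\]
so the $L^2(T_1)$ inner product of $u_{m,n}$ and $u_{k,l}$ equals the $L^2(R)$ inner product of $u_{m,n}$ (viewed via \eqref{eqn:umn} as a sum of three double-sine modes) against the single double-sine mode indexed by $(k,l)$. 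By the orthogonality of the double-sine system on $R$ this vanishes unless $(k,l)$ coincides with one of the three frequency pairs $(m,n)$, $(m'',n'')$, $(m',n')$ occurring in \eqref{eqn:umn} for $u_{m,n}$. The point is that, when we normalise the indices to the fundamental domain $0<m<n$, those three pairs all collapse to the \emph{same} representative: this is exactly the content of \eqref{eqn:unmmult}, which says $u_{m',n'}$ and $u_{m'',n''}$ are scalar multiples of $u_{m,n}$, hence carry the same normalised index. (One must also note $(m'',n'')$ may fail $0<m''<n''$ and needs its own normalisation, but \eqref{eqn:unmmult} again identifies it with $(m,n)$.) Therefore distinct normalised indices $(m,n)\neq(k,l)$ give a zero inner product, which is the desired pairwise orthogonality.

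Finally, the norm \eqref{eqn:umnnorm} follows by taking $(k,l)=(m,n)$ in the displayed identity: the right-hand side becomes $\frac{\sqrt3}{4}$ times the $L^2(R)$ inner product of $u_{m,n}$ with the single mode $\sin(\pi m x/\sqrt3)\sin(\pi n y)$. Expanding $u_{m,n}$ by \eqref{eqn:umn}, only the first term $2\sin(\pi m x/\sqrt3)\sin(\pi n y)$ survives this pairing (the other two modes are orthogonal to it, and are genuinely different frequencies from $(m,n)$ in $R$ since $0<m<n$ rules out the coincidences $m=3n$, $m-n=0$, etc. barred in \cref{lemma:umnmult}). Using $\norml{\sin(\pi m\cdot/\sqrt3)\sin(\pi n\cdot)}{2}{R}^2=\sqrt3/4$ we get $\int_{T_1}u_{m,n}^2=\frac{\sqrt3}{4}\cdot 2\cdot\frac{\sqrt3}{4}\cdot\frac{4}{\sqrt3}=\frac{\sqrt3}{2}$, which is \eqref{eqn:umnnorm}. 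Alternatively, and perhaps more cleanly, one can argue directly from \eqref{eqn:Pumn}: $\norml{u_{m,n}}{2}{R}^2 = 6\norml{u_{m,n}}{2}{T_1}^2$ since $R$ is tiled by six congruent copies on each of which $|\mathcal{P}_a u_{m,n}|=|u_{m,n}|$, while on the other hand \eqref{eqn:umn} exhibits $u_{m,n}$ on $R$ as an orthogonal sum of three double-sine modes each of squared $L^2(R)$-norm $4\cdot(\sqrt3/4)=\sqrt3$, giving $\norml{u_{m,n}}{2}{R}^2=3\sqrt3$ and hence $\norml{u_{m,n}}{2}{T_1}^2=\sqrt3/2$.
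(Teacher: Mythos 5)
The paper does not actually prove this lemma: it is stated without proof and attributed to \cite{Prager1998} (cf.\ the remark after \cref{cor:eigenfs}), so your proposal is supplying an argument the paper omits rather than paralleling one. Your overall structure is sound and mirrors the paper's treatment of the 45-90-45 case: once \eqref{eqn:Pumn} is granted, your chain $\int_{T_1}u_{m,n}u_{k,l}=\tfrac{\sqrt3}{4}\hat{\mathcal{P}_a u_{m,n}}(k,l)=\int_R u_{m,n}\,\sin(\pi kx/\sqrt3)\sin(\pi ly)$ is correct, the observation that the two non-principal frequency pairs of \eqref{eqn:umn} lie in the regions $n'<m'<3n'$ and $m''>3|n''|$ (hence never coincide with a normalised index $0<k<l$) correctly yields orthogonality, and both of your norm computations give $3\sqrt3/6=\sqrt3/2$ as required.

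The one soft spot is the justification of \eqref{eqn:Pumn} itself, which you rightly identify as the crux but argue for by appeal to \eqref{eqn:unmmult}. Those identities are not quite the right tool: \eqref{eqn:unmmult} is a statement about \emph{relabelling indices} (substituting $(m',n')$ or $(m'',n'')$ for $(m,n)$ permutes the three terms of the closed form \eqref{eqn:umn}), whereas \eqref{eqn:Pumn} is a statement about \emph{spatial symmetry} --- namely that the trigonometric expression \eqref{eqn:umn}, viewed as a function on all of $R$, satisfies $u_{m,n}(x_i,y_i)=c_i\,u_{m,n}(\xi,\eta)$ for each of the six maps $T_1\to T_i$. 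The two facts have a common source but neither formally implies the other, so the ``fiddly case check'' still has to be carried out on the six reflections directly (equivalently, one verifies that the sum $\sum_i c_i\, s_{m,n}\circ\sigma_i$ is anti-invariant under the reflection group generated by the sides of $T_1$, which is how Pr\'ager's formula is derived in the first place). This is a finite, purely computational verification, so the gap is one of execution rather than of idea, but as written the proposal leans on \eqref{eqn:unmmult} for a step it does not actually deliver.
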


\begin{cor}\label{cor:eigenfs}
    The functions $u_{m,n}$ are eigenfunctions of (minus) the Dirichlet Laplacian on $T_1$,
    with eigenvalue $\pi^2(\tfrac{m^2}{3}+n^2)$.
\end{cor}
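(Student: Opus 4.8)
The plan is to follow the template of the type‑(i) result from \cref{sec:isosceles}: verify the eigenvalue equation pointwise from the explicit formula \eqref{eqn:umn}, and then check the homogeneous Dirichlet condition edge by edge on $\partial T_1$.

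For the PDE, note that \eqref{eqn:umn} writes $u_{m,n}$ as a linear combination of three functions of the form $\sin(\alpha x)\sin(\beta y)$, each of which satisfies $-\Delta[\sin(\alpha x)\sin(\beta y)] = (\alpha^2+\beta^2)\sin(\alpha x)\sin(\beta y)$. I would simply read off the three frequency pairs
\[
  (\alpha,\beta)\in\left\{\left(\tfrac{\pi m}{\sqrt3},\,\pi n\right),\ \left(\tfrac{\pi(m+3n)}{2\sqrt3},\,\tfrac{\pi(m-n)}{2}\right),\ \left(\tfrac{\pi(m-3n)}{2\sqrt3},\,\tfrac{\pi(m+n)}{2}\right)\right\}
\]
and invoke the elementary identity $(m\pm3n)^2+3(m\mp n)^2 = 4m^2+12n^2$ to see that in all three cases $\alpha^2+\beta^2 = \pi^2\!\left(\tfrac{m^2}{3}+n^2\right)$. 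Linearity of $\Delta$ then gives $-\Delta u_{m,n} = \pi^2\!\left(\tfrac{m^2}{3}+n^2\right)u_{m,n}$, and since $u_{m,n}$ is a trigonometric polynomial this holds classically on all of $R$, a fortiori on $T_1$.

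It remains to check that $u_{m,n}$ vanishes on the three sides of $T_1$: the segments of $\{x=0\}$ and $\{y=0\}$ and the hypotenuse $\{y=1-\sqrt3\,x\}$ (cf.\ \cref{fig:t1arefl}). On $\{x=0\}$ and $\{y=0\}$ every summand of \eqref{eqn:umn} carries a sine factor that vanishes, so there is nothing to do. On the hypotenuse I would substitute $y=1-\sqrt3\,x$: using the hypothesis $m\equiv n\bmod 2$ (so that $\tfrac{m\pm n}{2}\in\Z$ and $(-1)^n=(-1)^m$), each $y$‑dependent factor $\sin(\pi n y)$ and $\sin(\tfrac{\pi y}{2}(m\mp n))$ becomes $\pm\sin$ of an integer multiple of $\sqrt3\,\pi x$ — the parity constraint being exactly what cancels the spurious phase shifts of $\pi\cdot(\text{integer})$ — after which a single application of the product‑to‑sum formula collapses the three terms into a combination of $\cos((2m\pm6n)t)$ and $\cos(4mt)$, with $t=\tfrac{\pi x}{2\sqrt3}$, whose coefficients sum to $0$. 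Hence $u_{m,n}|_{\partial T_1}=0$, and since $u_{m,n}\not\equiv0$ by \eqref{eqn:umnnorm}, it is a bona fide Dirichlet eigenfunction with eigenvalue $\pi^2\!\left(\tfrac{m^2}{3}+n^2\right)$. Alternatively, the vanishing on the hypotenuse follows from \eqref{eqn:Pumn}: $\mathcal{P}_a u_{m,n}$ is continuous on $R$ (it equals the trigonometric polynomial \eqref{eqn:umn}) and, by the construction of $\mathcal{P}_a$, antisymmetric under reflection across that line, so it must vanish on it.

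The only step involving any computation is the cancellation on the hypotenuse, and I expect it to be the ``main obstacle'' only in the bookkeeping sense: it is the exact analogue of the substitution showing the type‑(i) eigenfunctions vanish on $\{y=1-x\}$, with the parity assumption now playing the role that orthogonality of indices played automatically there.
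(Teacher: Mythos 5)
Your proposal is correct and follows essentially the same route as the paper: the eigenvalue equation is read off from the explicit trigonometric formula \eqref{eqn:umn} (each summand being a rectangle eigenfunction with the same eigenvalue $\pi^2(\tfrac{m^2}{3}+n^2)$), and the Dirichlet condition is checked on $\partial T_1$, where your secondary argument --- antisymmetry of $\mathcal{P}_a u_{m,n}$ under reflection across the hypotenuse, via \eqref{eqn:Pumn} --- is precisely what the paper means by ``vanish on the boundary by construction.'' You merely supply more detail (the explicit product-to-sum cancellation on the hypotenuse) where the paper waves its hands.
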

\begin{proof}
    Since each $\mathcal{P}_a u_{m,n}$ is a finite combination of eigenfunctions on $R$, we have \[
       -\Delta \mathcal{P}_au_{m,n}(x,y) = \pi^2\left(\frac{m^2}{3}+n^2\right)\mathcal{P}_au_{m,n}(x,y) \qfa (x,y)\in R
    \] by direct calculation.
    This equation holds pointwise owing to the smoothness of the functions;
    therefore, it holds when restricted to $T_1$.
    But $\mathcal{P}_au_{m,n}\equiv u_{m,n}$ on $T_1$, whence \[
        -\Delta u_{m,n}(\xi,\eta) = \pi^2\left(\frac{m^2}{3}+n^2\right)u_{m,n}(\xi,\eta) \qfa (\xi,\eta)\in T.
    \] These functions clearly vanish on the boundary of the triangle by construction.
\end{proof}

So far, our discussion is a recap of the work of \cite{Prager1998},
although we have added a more explicit proof of \cref{cor:eigenfs} than can be found there.
We will now use these results to obtain $L^p$ convergence of the eigenfunctions.

\section{$L^p$ Convergence of the Fourier series of $f_a$}\label{sec:assymconv}

\cref{eqn:umnnorm} tells us that the ``Fourier coefficients'' of $f_a$ (with respect to $u_{m,n}$!) are \[
    f_a^{\triangle}(m,n) := \frac{2}{\sqrt{3}}\int_{T_1}f_a(\xi,\eta)u_{m,n}(\xi,\eta)\dm{\xi}\dm{\eta},
\] and it follows from \eqref{eqn:Pafacoef} that \begin{equation}\label{eqn:Pafacoefeq}
    \hat{\mathcal{P}_af_a}(m,n) = 2f_a^{\triangle}(m,n).
\end{equation}

To help keep track of our indices, we will introduce the sets \[
    \mathcal{U}_{N} := \left\{(m,n) : 0 < m < n \leq N \text{ and } m \equiv n \mod 2\right\}.
\]

Denote the $N$-th ``component-wise'' partial sums on the triangle $T_1$ and rectangle $R$, respectively, by
\begin{align*}
    S^{T_1}_Nf_a(\xi,\eta) &:= \sum_{(m,n)\in\mathcal{U}_N}f_a^{\triangle}(m,n)u_{m,n}(\xi,\eta),\\
    S^R_N\mathcal{P}_af_a(x,y) &:= \sum_{0<m,n\leq N}\hat{\mathcal{P}_af_a}(m,n)\sin\left(\frac{\pi m}{\sqrt{3}}x\right)\sin\left(\pi ny\right).
\end{align*}

The next theorem is a key ingredient in the proof of our main result.
\begin{theorem}\label{thm:Lpfa}
    Let $f\in L^p(T)$ with $1<p<\infty$, and denote its antisymmetric part by $f_a$.
    Then $f_a\in L^p(T_1)$ and $S^{T_1}_Nf_a\to f_a$ in $L^p(T_1)$.
\end{theorem}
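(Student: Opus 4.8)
The plan is to mimic the strategy already used for the $45$-$90$-$45$ triangle: transfer the known $L^p$ convergence of double-sine series on the rectangle $R$ back to $T_1$ via the prolongation operator $\mathcal{P}_a$. The essential point to establish first is the intertwining identity
\[
    \mathcal{P}_a\left[S^{T_1}_N f_a\right](x,y) = S^R_N\left[\mathcal{P}_a f_a\right](x,y) \qfa (x,y)\in R,
\]
analogous to \cref{prop:PScommuteSquare}. To prove this, I would start from the definition of $S^R_N\mathcal{P}_af_a$ as a sum of $\hat{\mathcal{P}_af_a}(m,n)\sin(\pi mx/\sqrt{3})\sin(\pi ny)$ over the square lattice $0<m,n\leq N$, and partition that lattice into regions according to the symmetry relations \eqref{eqn:unmmult} in \cref{lemma:umnmult}. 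The map $(m,n)\mapsto(m',n')$ and $(m,n)\mapsto(m'',n'')$, together with $(m,n)\mapsto(n,m)$, organises the lattice points so that each basis function $\sin(\pi mx/\sqrt{3})\sin(\pi ny)$ appearing in the rectangle sum can be collected into the three-term combination defining $u_{m,n}$ for a representative index pair in $\mathcal{U}_N$; one then uses \eqref{eqn:Pumn} to rewrite $u_{m,n}$ as $\mathcal{P}_au_{m,n}$ and \eqref{eqn:Pafacoefeq} to match the coefficients, yielding $\mathcal{P}_a[S^{T_1}_Nf_a]$.

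Once the intertwining identity is in hand, the theorem follows quickly. First, $f\in L^p(T)$ gives $f_a\in L^p(T)$ (it is a difference of $f$ and its reflection, both in $L^p$), hence $f_a\in L^p(T_1)$ by restriction. Second, $\mathcal{P}_af_a\in L^p(R)$ with $\norml{\mathcal{P}_af_a}{p}{R}^p = 6\norml{f_a}{p}{T_1}^p$, since $\mathcal{P}_a$ glues together six isometric copies (up to sign) of $f_a$ on the six triangles $T_1,\dots,T_6$ tiling $R$. Then
\begin{align*}
    \norml{S^{T_1}_Nf_a - f_a}{p}{T_1}^p
        &= \tfrac{1}{6}\norml{\mathcal{P}_a\left[S^{T_1}_Nf_a\right] - \mathcal{P}_af_a}{p}{R}^p\\
        &= \tfrac{1}{6}\norml{S^R_N\left[\mathcal{P}_af_a\right] - \mathcal{P}_af_a}{p}{R}^p \to 0,
\end{align*}
by the classical $L^p(R)$ convergence of double-sine series for $1<p<\infty$ (which transfers from the torus by the standard reflection/transference arguments, as on the square).

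The main obstacle is the combinatorial bookkeeping in the intertwining step. Unlike the square case, where the lattice $[1,N]^2\cap\N^2$ splits cleanly into just two triangular regions swapped by $(m,n)\mapsto(n,m)$, here the relevant symmetry group acting on index pairs is larger (generated by the three involutions coming from \eqref{eqn:unmmult} and index-swapping), the parity constraint $m\equiv n\bmod 2$ and the degeneracies $n=m$, $m=3n$ must be tracked, and — crucially — one must check that the truncation region $0<m,n\leq N$ on the rectangle side is exactly the union of the orbits of $\mathcal{U}_N$ under this group, so that no frequency is over- or under-counted. In other words, I must verify that applying the symmetry relations to an index pair with $\max(m,n)\leq N$ never produces a pair escaping the square $[1,N]^2$, and conversely every pair in the square lies in the orbit of some element of $\mathcal{U}_N$; this compatibility of the cutoff with the symmetries is the delicate geometric fact (and is presumably where the restriction to these particular triangles, flagged in the introduction via Lamé's theorem, makes itself felt). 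Once that is confirmed, the rest is routine manipulation using \cref{lemma:umnmult} and \eqref{eqn:Pumn}.
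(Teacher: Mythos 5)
Your strategy is exactly the paper's: establish the intertwining identity $\mathcal{P}_a\left[S^{T_1}_Nf_a\right]=S^R_N\mathcal{P}_af_a$ and then transfer the $L^p$ convergence of double-sine series from $R$ to $T_1$ via $\norml{\mathcal{P}_au}{p}{R}^p=6\norml{u}{p}{T_1}^p$. The difficulty is that the one step you explicitly deferred --- checking that the square truncation $\{0<m,n\leq N\}$ is a union of complete orbits of $\mathcal{U}_N$ under the index symmetries of \cref{lemma:umnmult} --- is precisely the crux, and the compatibility you hope for is \emph{false}. Take $N=4$ and $(m,n)=(1,3)\in\mathcal{U}_4$: the three frequencies carried by $u_{1,3}$ in \eqref{eqn:umn} are $(1,3)$, $(m',n')=(4,2)$, and (after replacing $n''=(m-n)/2=-1$ by its absolute value) $\bigl((m+3n)/2,(n-m)/2\bigr)=(5,1)$, which escapes $[1,4]^2$. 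Hence $\mathcal{P}_a\left[S^{T_1}_4f_a\right]$ contains the frequency $(5,1)$ while $S^R_4\mathcal{P}_af_a$ does not, and $\hat{\mathcal{P}_af_a}(5,1)=-\hat{\mathcal{P}_af_a}(1,3)\neq0$ in general. Structurally, the regions $I$ ($m<n$), $II$ ($n<m<3n$) and $III$ ($m>3n$) occupy asymptotic fractions $1/2$, $1/3$ and $1/6$ of the square $[1,N]^2$, so the maps $(m,n)\mapsto(m',n')$ and $(m,n)\mapsto(m'',n'')$ cannot possibly biject $\mathcal{U}_N$ onto $II\cap[1,N]^2$ and $III\cap[1,N]^2$; they send part of $\mathcal{U}_N$ outside the square. (You are in good company: the paper's own proof performs the same regrouping of $\sum_{II}$ and $\sum_{III}$ into a sum over $\mathcal{U}_N$ without addressing this boundary mismatch, which is absent in the genuinely two-region square case of \cref{prop:PScommuteSquare}.)

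The repair is to give up on the square truncation of the rectangle series and instead define the partial sum on $R$ over
\[
    \Lambda_N:=\bigcup_{(m,n)\in\mathcal{U}_N}\left\{(m,n),\ \left(\tfrac{3n-m}{2},\tfrac{m+n}{2}\right),\ \left(\tfrac{m+3n}{2},\tfrac{n-m}{2}\right)\right\},
\]
for which the intertwining identity holds by construction (each $u_{m,n}$ contributes exactly its own three frequencies). One must then prove that these adapted truncations of the double-sine series still converge in $L^p(R)$ for $1<p<\infty$; this is true because each $\Lambda_N$ is the lattice part of a polygon with a uniformly bounded number of edges drawn from a fixed finite set of directions, so the corresponding Fourier multipliers are compositions of finitely many half-plane (directional conjugate-function) multipliers, bounded on $L^p$ uniformly in $N$, and convergence on the dense class of trigonometric polynomials is immediate. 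Without some supplement of this kind, the final display of your argument does not follow from the classical convergence of $S^R_N$.
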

\begin{proof}
    Since $f\in L^p(T)$ and $f_a$ is a linear combination of $f$ and its reflections, clearly $f_a\in L^p(T)$
    and, by symmetry (and abuse of notation), ${f_a = f_a|_{T_1}\in L^p(T_1)}$.

    Next, we break up the set of indices with $0<m,n\leq N$ into three sets: \[
        0 < m < n, \quad n < m < 3n \qand 3n < m,
    \] noting that the coefficients when $m=n$ and $m=3n$ vanish.
    From \cref{lemma:umnmult}, we may label these groups as $(m,n)$, $(m',n')$ and $(m'',n'')$,
    respectively.
    Note that such labelling divides $[1,N]\times[1,N]\cap\Z^2$ into three disjoint regions,
    which we label as $I=\mathcal{U}_N$, $II$ and $III$; see \cref{fig:mnregion}.
    
    It follows from \eqref{eqn:unmmult} that \[
        \hat{\mathcal{P}_af_a}(m,n) = -(-1)^{(m-n)/2}\hat{\mathcal{P}_af_a}(m',n')
        = (-1)^{(m+n)/2}\hat{\mathcal{P}_af_a}(m'',n'').
    \]


    Hence, \begin{align*}
        S^R_N\mathcal{P}_af_a(x,y) &= \sum_{(m,n)\in\mathcal{U}_N}\hat{\mathcal{P}_af_a}(m,n)\sin\left(\frac{\pi m x}{\sqrt{3}}\right)\sin\left(\pi ny\right)\\
        &\qquad {} + \sum_{(m',n')\in II}\hat{\mathcal{P}_af_a}(m',n')\sin\left(\frac{\pi m' x}{\sqrt{3}}\right)\sin\left(\pi n'y\right)\\
        &\qquad {} + \sum_{(m'',n'')\in III}\hat{\mathcal{P}_af_a}(m'',n'')\sin\left(\frac{\pi m'' x}{\sqrt{3}}\right)\sin\left(\pi n''y\right)\\
        &=\sum_{(m,n)\in\mathcal{U}_N}\hat{\mathcal{P}_af_a}(m,n)\biggl[ \sin\left(\frac{\pi m x}{\sqrt{3}}\right)\sin\left(\pi ny\right)\\
        &\qquad {} -(-1)^{(m+n)/2}\sin\left(\frac{\pi x}{2\sqrt{3}(m+3n)}\right)\sin\left(\frac{\pi y}{2} (m-n)\right)\\
        &\qquad {} +(-1)^{(m-n)/2}\sin\left(\frac{\pi x}{2\sqrt{3}}(m-3n)\right)\sin\left(\frac{\pi y}{2}(m+n)\right)\biggr]\\
        & = \sum_{(m,n)\in\mathcal{U}_N}\frac{\hat{\mathcal{P}_af_a}(m,n)}{2}u_{m,n}(x,y) \quad \text{by \eqref{eqn:umn}}\\
        & = \sum_{(m,n)\in\mathcal{U}_N}f_a^{\triangle}(m,n)\mathcal{P}_au_{m,n}(x,y).
    \end{align*} The last line follows from \eqref{eqn:Pumn} and \eqref{eqn:Pafacoefeq}.
    Since $\mathcal{P}_a$ is a linear operator, we have therefore proved that \[
        \mathcal{P}_a\left[S^{T_1}_Nf_a\right](x,y) = S^R_N\mathcal{P}_af_a(x,y).
    \] Furthermore, since each transformation $T_1\to T_i$ is an isometry, we have \[
        \norml{\mathcal{P}_au}{p}{R} = 6^{1/p}\norml{u}{p}{T_1} \qfa u\in L^p(T_1).
    \]
    Combining the last two displayed equations with the $L^p$ convergence of double-sine series on $R$,
    we obtain \begin{align*}
        \norml{S_N^{T_1}f_a-f_a}{p}{T_1}^p &= \frac{1}{6}\norml{\mathcal{P}_a\left[S_N^{T_1}f_a\right]-\mathcal{P}_af_a}{p}{R}^p\\
        &= \frac{1}{6}\norml{S_N^{R}\mathcal{P}_af_a-\mathcal{P}_af_a}{p}{R}^p \to 0
    \end{align*} as $N\to\infty$.
\end{proof}

\begin{figure}[h]
    \begin{center}
        \includegraphics{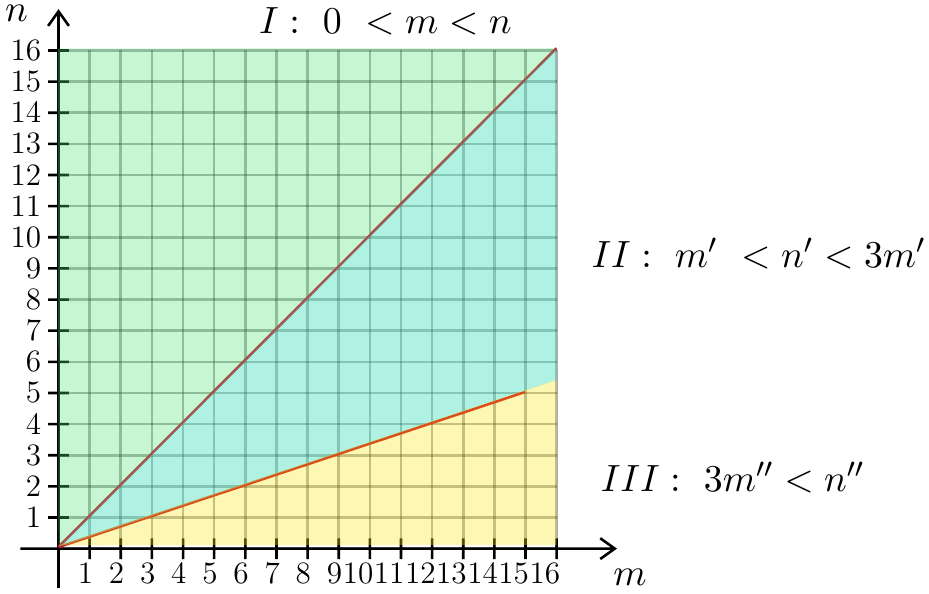}
        \caption{The three regions in the lattice $[1,16]\times[1,16]$.
        The lines correspond to the ``degenerate cases'' $m=n$ and $m=3n$
        in which $u_{m,n}\equiv 0$, and thus the Fourier coefficients vanish.
        Note that all points where $m$ and $n$ have opposite parity will also be excluded.}
        \label{fig:mnregion}
    \end{center}
\end{figure}
\section{Symmetric eigenfunctions on the equilateral triangle}\label{sec:symm}
It is now time to deal with the symmetric part: $f_s$.
To do so, we introduce a \emph{symmetric prolongation} of $u$ to $R$:
\begin{equation}\label{eqn_prolongs}
    \mathcal{P}_su(x,y) := \sum_{i=1}^6d_iu(\xi(x_i),\eta(y_i)) \qtext{where}
    d_i := \begin{cases}
        1 & \text{if } i=1,4,5;\\
        -1 & \text{if } i=2,3,6.
    \end{cases}
\end{equation} This notation uses the same ``shorthand'' as in \eqref{eqn:prolong},
but uses a different combination of reflections and anti-reflections; see \cref{fig:t1srefl}.

\begin{figure}[h]
    \begin{center}
        \includegraphics{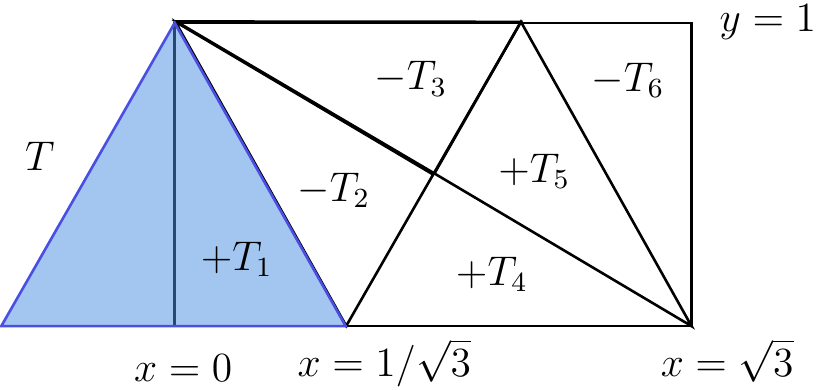}
        \caption{The sign arrangements for the symmetric prolongation $\mathcal{P}_s$.
        See \cite{Prager1998}.}
        \label{fig:t1srefl}
    \end{center}
\end{figure}

From \cref{fig:t1srefl} it is clear that the prolongation $\mathcal{P}_su$ only vanishes on the horizontal sides $y=0$ and $y=1$ of the rectangle,
since for an $x$-symmetric function we cannot assume that $u(x,0)\equiv 0$.
We therefore express $\mathcal{P}_sf_s$ as a cosine-sine series on $R$: \[
    \sum_{m=0}^\infty\sum_{n=1}^\infty \hat{\mathcal{P}_sf_s}(m,n)\cos\left(\frac{\pi mx}{\sqrt{3}}\right)\sin\left(\pi ny\right).
\] When $m,n>0$, the Fourier coefficients of $\hat{\mathcal{P}_sf_s}$ are given by 
\begin{align}\label{eqn:Psfscoef}
    \hat{\mathcal{P}_sf_s}(m,n) &= \frac{4}{\sqrt{3}}\int_{R}\mathcal{P}_sf_s(x,y)\cos\left(\frac{\pi mx}{\sqrt{3}}\right)\sin\left(\pi ny\right)\dm{x}\dm{y}\nonumber\\
        &=\frac{4}{\sqrt{3}}\int_{T_1}f_s(\xi,\eta)\left[\sum_{i=1}d_i\cos\left(\frac{\pi mx_i}{\sqrt{3}}\right)\sin\left(\pi ny_i\right)\right]\dm{\xi}\dm{\eta}
        && \text{by } \eqref{eqn:prolong}\nonumber\\
        &= \frac{4}{{\sqrt{3}}}\int_{T_1}f_s(\xi,\eta)v_{m,n}(\xi,\eta)\dm{\xi}\dm{\eta};
\end{align}
when $m=0$ and $n>0$, they are\begin{equation}\label{eqn:Psfscoef0}
    \hat{\mathcal{P}_sf_s}(0,n) = \frac{2}{\sqrt{3}}\int_{T_1}f_s(\xi,\eta)v_{0,n}(\xi,\eta)\dm{\xi}\dm{\eta}.
\end{equation} Here we have defined the symmetric eigenfunctions as \[
    v_{m,n}(\xi,\eta) := \sum_{i=1}d_i\cos\left(\frac{\pi mx_i}{\sqrt{3}}\right)\sin\left(\pi ny_i\right).
\]
(Recall that each pair $(x_i,y_i)\in R$ is a function of $(\xi,\eta)\in T_1$.)

By explicit computation, Pragér obtained the following.
\begin{lemma}\label{lemma:vmnmult}
    For integers $0\leq m<n$ with the same parity, \begin{align}\label{eqn:vmn}
        v_{m,n}(x,y) &= 2\cos\left(\frac{\pi m x}{\sqrt{3}}\right)\sin\left(\pi n y\right)\nonumber\\
        &\quad+2(-1)^{(m+n)/2}\cos\left(\frac{\pi x}{2\sqrt{3}}(m+3n)\right)\sin\left(\frac{\pi y}{2}(m-n)\right)\\
        &\quad-2(-1)^{(m-n)/2}\cos\left(\frac{\pi x}{2\sqrt{3}}(3n-m)\right)\sin\left(\frac{\pi y}{2}(m+n)\right)\nonumber.
    \end{align}
    Furthermore, $v_{m,n}\equiv 0$ whenever $n$ and $m$ have different parity.
    Finally, with $0<m<n$ as above, define the pairs \begin{align*}
        \left\{\begin{aligned}
            m' &= \tfrac{1}{2}(3n-m),\\
            n' &= \tfrac{1}{2}(m+n);
       \end{aligned}\right.\qand
        &\left\{\begin{aligned}
            m'' &= \tfrac{1}{2}(m+3n),\\
            n'' &= \tfrac{1}{2}(m-n).
        \end{aligned}\right.
    \end{align*}
    Then, \begin{equation}\label{eqn:vnmmult}
        v_{m',n'} = -(-1)^{(m-n)/2}v_{m,n}, \quad v_{m'',n''} = -(-1)^{(m+n)/2}v_{m,n},
    \end{equation} and, for all $n>0$, \begin{equation}\label{eqn:v0nmult}
        v_{0,n} = -(-1)^{n/2}v_{3n/2,n/2}.
    \end{equation}
\end{lemma}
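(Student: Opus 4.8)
The plan is to prove \cref{lemma:vmnmult} by the same explicit computation that underlies \cref{lemma:umnmult}, the only changes being that cosines replace sines in the first coordinate and that the sign pattern $d_i$ replaces $c_i$; I would present it as a direct verification of a trigonometric identity. First I would record explicit affine formulas for the six congruences $\phi_i\colon T_1\to T_i$, $(\xi,\eta)\mapsto(x_i(\xi,\eta),y_i(\xi,\eta))$, reading them off \cref{fig:t1srefl} (equivalently, taking them from \cite{Prager1998}): each $\phi_i$ is a composition of reflections in the edges of the tiling, hence affine, with $\phi_1$ the identity, and each of $x_i,y_i$ is a constant plus a linear combination of $\xi$ and $\eta$ with coefficients among $\pm1,\pm\tfrac12,\pm\tfrac{\sqrt3}{2}$. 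Substituting these into the definition
\[
    v_{m,n}(\xi,\eta)=\sum_{i=1}^6 d_i\cos\!\Bigl(\tfrac{\pi m x_i}{\sqrt3}\Bigr)\sin(\pi n y_i)
\]
reduces the first two assertions of the lemma to an identity in $(\xi,\eta)\in T_1$, which then extends to all of $R$ by construction.

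To check this identity I would expand each summand using the angle-addition formulas for $\cos$ and $\sin$, so that every term becomes a product of trigonometric functions of the three ``basic'' linear forms on the right-hand side of \eqref{eqn:vmn}, times a constant phase factor equal to $\cos$ or $\sin$ of a quantity such as $\tfrac{\pi}{2}(m\pm n)$ or $\tfrac{\pi}{2}m$. Here the hypothesis $m\equiv n\pmod2$ enters decisively: it makes $\tfrac12(m\pm n)$ and $\tfrac12(m\pm 3n)$ integers, so those phase factors take only the values $0$ and $\pm1$, and it is exactly these evaluations that produce the signs $(-1)^{(m+n)/2}$ and $(-1)^{(m-n)/2}$, whereupon the six contributions collapse in pairs to the three surviving products of \eqref{eqn:vmn} (each with coefficient $2$). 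When instead $m$ and $n$ have opposite parity, the same phase factors involve $\tfrac{\pi}{2}$ times an odd integer and, with the sign pattern $d_i$, every surviving coefficient vanishes, so $v_{m,n}\equiv0$; this is the one step where $d_i$ rather than $c_i$ dictates which terms cancel. The case $m=0$ is the specialisation $\cos0=1$, and the parity condition then forces $n$ even.

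For the index symmetries \eqref{eqn:vnmmult} and \eqref{eqn:v0nmult} I would argue purely from the closed form \eqref{eqn:vmn}, not from the tiles. Given $0<m<n$ with $m\equiv n\pmod2$, one checks that $(m',n')$ and $(m'',n'')$ are again pairs of integers of equal parity (and that $(3n/2,n/2)$ is such a pair when $m=0$ and $n$ is even), reading a negative second index through $\sin(-\theta)=-\sin\theta$; then substituting each pair into \eqref{eqn:vmn} and matching up the three cosine-sine products, while tracking the factor $(-1)^{(\cdot)/2}$ through the substitutions $m\mapsto m'$, $n\mapsto n'$ (and similarly for the double-primed pair), yields exactly the claimed scalar multiples of $v_{m,n}$. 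If a substituted pair lies outside the range $0<m<n$ (this happens here; for instance $n''=\tfrac12(m-n)<0$), one interprets it via the oddness of $\sin$ and, if necessary, normalises it using the relations already being established.

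The main obstacle is entirely bookkeeping: writing down the six maps $\phi_i$ with the correct orientations, and then carrying the expansion through the many trigonometric products without a sign error, since the cancellation producing \eqref{eqn:vmn} --- and, dually, the vanishing in the opposite-parity case --- is delicate and turns on the precise interplay between the signs $d_i$ and the geometry of the tiling. Once the $\phi_i$ are fixed the rest is mechanical, and the relations \eqref{eqn:vnmmult}--\eqref{eqn:v0nmult} are then formal consequences of \eqref{eqn:vmn}.
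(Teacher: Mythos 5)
Your plan is correct and coincides with what the paper does: the paper simply cites Pr\'ager's explicit computation for \eqref{eqn:vmn} and the vanishing in the opposite-parity case, and your substitution of the six affine congruences followed by angle-addition (with the parity hypothesis turning the half-integer phases into signs $(-1)^{(m\pm n)/2}$) is exactly that computation, carried out with the sign pattern $d_i$ in place of $c_i$. Your observation that \eqref{eqn:vnmmult} and \eqref{eqn:v0nmult} then follow formally from the closed form \eqref{eqn:vmn} by substituting $(m',n')$ and $(m'',n'')$ and matching the three cosine--sine products (reading $n''<0$ through the oddness of $\sin$) also checks out term by term.
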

\begin{remark}\label{rem}
Note that we no longer exclude the cases $m=0$ and $m=3n$, since the corresponding $v_{m,n}$ do not vanish,
and must therefore be carefully accounted for when we compute the partial sums.
Note also that $m$ and $n$ must have the same parity, so that $n$ is even when $m=0$,
and, consequently, the right-hand side of \eqref{eqn:v0nmult} is well-defined.
We can thus regard $(m',n') = (3n/2,n/2)$ when $m=0$.
\end{remark}

It is also clear that $v_{m,n}$ are the symmetric eigenfunctions of the Dirichlet Laplacian on $T$,
restricted to $T_1$; the proof is identical to that of \cref{cor:eigenfs}.
\citep[See][for a fuller treatment.]{McCartin2003}

\begin{lemma}
    The eigenfunctions $\{v_{m,n} : 0 \leq m < n, n>0, m \equiv n \mod 2\}$ are pairwise orthogonal.
    Furthermore, \begin{equation}\label{eqn:Pvmn}
        \mathcal{P}_s v_{m,n}(x,y) = v_{m,n}(x,y) \qfa (x,y)\in R,
    \end{equation} from which it follows that
    \begin{align}
        \norml{v_{m,n}}{2}{T_1}^2 &= \frac{\sqrt{3}}{2} &\qfa n,m>0\label{eqn:vmnnorm};\\
        \norml{v_{0,n}}{2}{T_1}^2 &= \sqrt{3} &\qfa n>0\label{eqn:v0nnorm}.
    \end{align}
\end{lemma}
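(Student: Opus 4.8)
The plan is to follow the same route used for the $u_{m,n}$ in \cref{sec:assym}: prove the reflection identity \eqref{eqn:Pvmn} first, then read off pairwise orthogonality and the two norm formulas from it, using the coefficient formulas \eqref{eqn:Psfscoef}, \eqref{eqn:Psfscoef0} and the explicit expression \eqref{eqn:vmn}. For \eqref{eqn:Pvmn} itself: since $\mathcal{P}_s w(x,y) = d_j\,w(\xi,\eta)$ whenever $(x,y)$ lies in the sub-triangle $T_j$ and $(\xi,\eta)\in T_1$ is its preimage under the $j$-th reflection, the claim $\mathcal{P}_s v_{m,n}\equiv v_{m,n}$ on $R$ is equivalent to the six transformation laws $v_{m,n}\circ(\text{$j$-th reflection}) = d_j\,v_{m,n}$ on $T_1$. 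These I would verify by substituting Práger's parametrisations of the maps $T_1\to T_j$ into \eqref{eqn:vmn}; equivalently — and this is the same content — the relabelling identities \eqref{eqn:vnmmult} say precisely that the three cosine--sine constituents of \eqref{eqn:vmn} are permuted among themselves, up to the signs $d_j$, by these reflections. This is the $\mathcal{P}_s$-analogue of \eqref{eqn:Pumn} and of \cref{lemma:umnsquare}(iii). The cases $m=0$ and $m=3n$ need the extra care flagged in \cref{rem}: when $m=0$ the modes at $\bigl(\tfrac{m+3n}{2},\tfrac{n-m}{2}\bigr)$ and $\bigl(\tfrac{3n-m}{2},\tfrac{m+n}{2}\bigr)$ coincide into a single cosine--sine mode, now with coefficient $\pm4$ — which is exactly why \eqref{eqn:Psfscoef0} carries the constant $\tfrac{2}{\sqrt3}$ rather than $\tfrac{4}{\sqrt3}$ — and \eqref{eqn:v0nmult} is what keeps the bookkeeping consistent there.

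Granting \eqref{eqn:Pvmn}, orthogonality is read straight off \eqref{eqn:vmn}. Unwinding the change of variables, the computation \eqref{eqn:Psfscoef} in fact holds for \emph{every} $w\in L^2(T_1)$, not just symmetric parts: $\hat{\mathcal{P}_s w}(m,n) = \tfrac{4}{\sqrt3}\int_{T_1} w\,v_{m,n}$ for $m>0$ (and with $\tfrac{2}{\sqrt3}$, via \eqref{eqn:Psfscoef0}, when $m=0$). Taking $w = v_{k,l}|_{T_1}$ and using \eqref{eqn:Pvmn},
\[
    \int_{T_1} v_{m,n} v_{k,l} \;=\; \tfrac{\sqrt3}{4}\,\hat{\mathcal{P}_s v_{k,l}}(m,n) \;=\; \tfrac{\sqrt3}{4}\,\bigl[\text{the $(m,n)$-th cosine--sine coefficient of $v_{k,l}$ on $R$}\bigr],
\]
and this coefficient is visible directly in \eqref{eqn:vmn}. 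Among the (at most) three modes occurring there for $v_{k,l}$ — namely $(k,l)$, $\bigl(\tfrac{k+3l}{2},\tfrac{l-k}{2}\bigr)$ and $\bigl(\tfrac{3l-k}{2},\tfrac{k+l}{2}\bigr)$ — only the first has first coordinate strictly smaller than its second, the other two having first coordinate strictly larger. Hence for distinct reduced pairs $0\le m<n$, $0\le k<l$ the coefficient vanishes, which is the asserted pairwise orthogonality.

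Setting $(k,l)=(m,n)$ in the same identity gives the norms: the lead coefficient in \eqref{eqn:vmn} is $2$, so $\norml{v_{m,n}}{2}{T_1}^2 = \tfrac{\sqrt3}{4}\cdot2 = \tfrac{\sqrt3}{2}$ when $m,n>0$ (matching \eqref{eqn:umnnorm}), and $\norml{v_{0,n}}{2}{T_1}^2 = \tfrac{\sqrt3}{2}\cdot2 = \sqrt3$ when $m=0$. As a cross-check, \eqref{eqn:Pvmn} together with the fact that each $T_1\to T_j$ is an isometry gives $\norml{v_{m,n}}{2}{R}^2 = 6\,\norml{v_{m,n}}{2}{T_1}^2$, and evaluating the left-hand side from \eqref{eqn:vmn} by orthogonality of the cosine--sine system on $R$ — three modes with coefficients $\pm2$ when $m>0$, two modes with coefficients $2$ and $\pm4$ when $m=0$ — returns the same values.

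The \emph{crux} is \eqref{eqn:Pvmn}: everything downstream is routine once it is in hand. Within that step the delicate points are matching the signs $d_j$ against the powers of $-1$ threading through Práger's reflection formulas, and handling the degeneracies $m=0$ and $m=3n$; in particular one must check that each orbit of index pairs under the relabellings of \cref{lemma:vmnmult} contains exactly one reduced pair $0\le m<n$, which is precisely what forces the off-diagonal coefficients in the orthogonality step to vanish.
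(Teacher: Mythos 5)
Your proposal is correct and follows the same route the paper itself takes for the analogous lemma on the square (\cref{lemma:umnsquare}(iv)) — establish the prolongation identity \eqref{eqn:Pvmn}, then obtain orthogonality and the norms by reading the cosine--sine coefficients of $v_{k,l}$ off \eqref{eqn:vmn}, with the observation that only the reduced pair $(k,l)$ among the three constituent modes satisfies $m<n$; the paper omits the proof of this particular lemma, citing Pr\'ager. (One parenthetical quibble: the constant $\tfrac{2}{\sqrt3}$ in \eqref{eqn:Psfscoef0} reflects the $m=0$ normalisation of the cosine system on $[0,\sqrt3]$ rather than the coincidence of the two reflected modes, but this does not affect your computations, which arrive at the correct values.)
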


Once again, these results are containted in \cite{Prager1998}.
They are crucial to our proof of $L^p$ convergence in the next section.

\section{$L^p$ Convergence of the Fourier Series of $f_s$}\label{sec:symmconv}

Writing\[
    f_s^{\triangle}(m,n) := \frac{1}{\norml{v_{m,n}}{2}{T_1}^2}\int_{T_1}f_sv_{m,n},  
\]
it immediately follows from \eqref{eqn:Psfscoef},\eqref{eqn:vmnnorm} and \eqref{eqn:Psfscoef0},\eqref{eqn:v0nnorm}, that
\begin{equation}\label{eqn:Psfscoefeq}
    \hat{\mathcal{P}_sf_s}(m,n) = 2f_s^{\triangle}(m,n) \qfa m\geq0,n>0.
\end{equation}

The next theorem is the other key ingredient of our main result.
\begin{theorem}\label{thm:Lpfs}
    Let $f\in L^p(T)$ with $1<p<\infty$, and denote its symmetric part by $f_s$.
    Then $f_s\in L^p(T_1)$ and $S^{T_1}_Nf_s\to f_s$ in $L^p(T_1)$.
\end{theorem}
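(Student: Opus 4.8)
The plan is to mirror the proof of \cref{thm:Lpfa} almost verbatim, replacing the antisymmetric prolongation $\mathcal{P}_a$ by the symmetric one $\mathcal{P}_s$ and the double-sine series on $R$ by the cosine-sine series. First I would record that $f_s\in L^p(T_1)$ for the same reason as in \cref{thm:Lpfa}: $f_s$ is a linear combination of $f$ and its reflection, hence lies in $L^p(T)$, and by symmetry it is determined by (and has the same $L^p$ norm properties as) its restriction to $T_1$. Then I would introduce the partial sums
\[
    S^{T_1}_N f_s(\xi,\eta) := \sum_{(m,n)\in\mathcal{V}_N} f_s^{\triangle}(m,n)\,v_{m,n}(\xi,\eta),
\]
where $\mathcal{V}_N := \{(m,n) : 0\leq m<n\leq N,\ n>0,\ m\equiv n\bmod 2\}$, and the cosine-sine partial sum $S^R_N\mathcal{P}_sf_s$ on $R$ truncated at $0\leq m\leq N$, $1\leq n\leq N$.

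The core computation is to show $\mathcal{P}_s[S^{T_1}_N f_s] = S^R_N\mathcal{P}_sf_s$ pointwise on $R$. Following the template in \cref{thm:Lpfa}, I would split the index lattice $\{0\leq m\leq N,\ 1\leq n\leq N\}$ (with the parity constraint) into regions according to the map $(m,n)\mapsto(m,n),(m',n'),(m'',n'')$ supplied by \cref{lemma:vmnmult}, namely $0<m<n$, $n<m<3n$, $3n<m$, and use the symmetry relations \eqref{eqn:vnmmult} together with $\hat{\mathcal{P}_sf_s}(m',n') = -(-1)^{(m-n)/2}\hat{\mathcal{P}_sf_s}(m,n)$ etc.\ (which follow from \eqref{eqn:vnmmult} and \eqref{eqn:Psfscoef}) to collapse the three sums into a single sum over $0<m<n$; the three cosine-sine terms assemble into $\tfrac12\hat{\mathcal{P}_sf_s}(m,n)v_{m,n}$ by \eqref{eqn:vmn}, and then $\tfrac12\hat{\mathcal{P}_sf_s}(m,n) = f_s^{\triangle}(m,n)$ by \eqref{eqn:Psfscoefeq} while $v_{m,n} = \mathcal{P}_sv_{m,n}$ on $R$ by \eqref{eqn:Pvmn}, giving the claimed identity after using linearity of $\mathcal{P}_s$. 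Once this is in hand, the conclusion follows exactly as before: since each $T_1\to T_i$ is an isometry, $\norml{\mathcal{P}_su}{p}{R} = 6^{1/p}\norml{u}{p}{T_1}$, so
\[
    \norml{S_N^{T_1}f_s - f_s}{p}{T_1}^p = \tfrac16\norml{S_N^R\mathcal{P}_sf_s - \mathcal{P}_sf_s}{p}{R}^p \to 0
\]
by the classical $L^p$ convergence of cosine-sine series on a rectangle.

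The main obstacle — and the one real difference from the antisymmetric case — is the bookkeeping around the degenerate lines $m=0$ and $m=3n$, which were excluded in \cref{lemma:umnmult} but are \emph{not} excluded here (see \cref{rem}): the functions $v_{0,n}$ and $v_{3n,n}$ do not vanish. I would handle $m=3n$ by noting that under the labelling it corresponds to the boundary between the regions $n<m<3n$ and $3n<m$, where one of $(m',n')$ or $(m'',n'')$ degenerates, and that \eqref{eqn:v0nmult} identifies $v_{0,n}$ with $\pm v_{3n/2,n/2}$, so the $m=0$ column of the rectangle sum gets folded into the region $0<m<n$ exactly as prescribed by \cref{rem}'s convention $(m',n')=(3n/2,n/2)$; the factor-of-two discrepancy between \eqref{eqn:Psfscoef} and \eqref{eqn:Psfscoef0} is precisely compensated by \eqref{eqn:v0nnorm} versus \eqref{eqn:vmnnorm}, so \eqref{eqn:Psfscoefeq} still reads $\hat{\mathcal{P}_sf_s}(m,n) = 2f_s^{\triangle}(m,n)$ uniformly. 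Checking that these degenerate contributions slot in with the correct signs and multiplicities, so that no term is double-counted or dropped, is the delicate part; everything else is a routine transcription of the argument in \cref{sec:assymconv}.
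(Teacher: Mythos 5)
Your proposal follows essentially the same route as the paper: reduce to the cosine--sine series on $R$ via the identity $\mathcal{P}_s[S^{T_1}_Nf_s]=S^R_N\mathcal{P}_sf_s$, obtained by the same three-region decomposition of the index lattice, with the degenerate lines $m=0$ and $m=3n$ paired off via the convention $(0',n')=(3n/2,n/2)$ of \cref{rem} and the factor-of-two discrepancy between \eqref{eqn:Psfscoef} and \eqref{eqn:Psfscoef0} absorbed exactly as you describe. This matches the paper's proof, including the final passage to $L^p(T_1)$ via the isometry identity $\norml{\mathcal{P}_su}{p}{R}=6^{1/p}\norml{u}{p}{T_1}$.
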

\begin{proof}
    The proof is essentially the same as that of \cref{thm:Lpfa},
    the only modifications arising from the ``degenerate'' indices $(0,n)$ and $(3n,n)$.

    As before, we break up the set of indices with $0\leq m\leq N$, $0<n\leq N$ into three sets: \[
        0 < m < n, \quad n < m < 3n \qand 3n < m
    \] sparing the cases $m=0$ and $m=3n$.
    From \cref{lemma:vmnmult}, we may label these groups as $(m,n)$, $(m',n')$ and $(m'',n'')$,
    respectively, dividing the lattice ${[0,N]\times[1,N]\cap\Z^2}$ into three disjoint regions;
    see once again \cref{fig:mnregion}.
    
    It follows from \eqref{eqn:vnmmult} that, for all positive indices, \[
        \hat{\mathcal{P}_sf_s}(m,n) = -(-1)^{(m-n)/2}\hat{\mathcal{P}_sf_s}(m',n')
        = (-1)^{(m+n)/2}\hat{\mathcal{P}_sf_s}(m'',n''),
    \] and from \eqref{eqn:v0nmult} that \begin{equation}\label{eqn:hatPsfsmult}
        \hat{\mathcal{P}_sf_s}(0,n) = -(-1)^{n/2}\hat{\mathcal{P}_sf_s}(3n/2,n/2) \qfa n > 0.
    \end{equation}Recall that we identify $(0',n')=(3n/2,n/2)$ and that this is precisely the case $m'=3n'$;
    see \cref{rem}.
    
    We begin by grouping the partial sums on $R$ as follows: \begin{align}
        S_{N}^{R}\mathcal{P}_sf_s(x,y) &= \sum_{\substack{0<m,n\leq N\\m\neq n, m\neq 3n}}
            \hat{\mathcal{P}_sf_s}(m,n)\cos\left(\frac{\pi mx}{\sqrt{3}}\right)\sin\left(\pi ny\right)\label{eqn:Snspos}\\
            &\quad {} +\biggl[\sum_{(0,n)}\hat{\mathcal{P}_sf_s}(0,n)\sin(\pi ny)\nonumber\\
            &\quad {} +\sum_{(0',n')}\hat{\mathcal{P}_sf_s}(0',n')\cos\left(\frac{0'\pi x}{\sqrt{3}}\right)\sin\left(\pi n'y\right)\biggr]
            \label{eqn:Sns0}.
    \end{align}
    
    The calculations for the term \eqref{eqn:Snspos} proceed in the same way as for
    the antisymmetric part $S_N^R\mathcal{P}_af_a$,
    using \eqref{eqn:vmn} instead of \eqref{eqn:umn} after grouping the various cosine-sine terms.
    We will therefore only consider the bracketed term \eqref{eqn:Sns0} in detail.

    The sums in \eqref{eqn:Sns0} are taken over even $n\leq N$ and pairs ${(0',n')=(3n/2,n/2)}$
    corresponding to $m=3n$.
    Using \eqref{eqn:hatPsfsmult} we have
    \begin{align*}
        \sum_{(0,n)}&\hat{\mathcal{P}_sf_s}(0,n)\sin(\pi ny)\nonumber
            +\sum_{(0',n')}\hat{\mathcal{P}_sf_s}(0',n')\cos\left(\frac{0'\pi x}{\sqrt{3}}\right)\sin\left(\pi n'y\right)\\
        &= \sum_{(0,n)}\hat{\mathcal{P}_sf_s}(0,n)\left[\sin\left(\pi n y\right)
            - \mathbf{2}(-1)^{n/2}\cos\left(\frac{2n\pi x}{2\sqrt{3}}\right)\sin\left(\frac{3\pi y}{2}\right)\right]\\
        &= \sum_{(0,n)}\hat{\mathcal{P}_sf_s}(0,n)\biggl[\cos\left(\frac{\pi \cdot 0\cdot x}{\sqrt{3}}\right)\sin\left(\pi n y\right)\\
            &\qquad {} +(-1)^{(0+n)/2}\cos\left(\frac{\pi x}{2\sqrt{3}}(0+3n)\right)\sin\left(\frac{\pi m}{2}(0-n)\right)\\
            &\qquad {} -(-1)^{(0-n)/2}\cos\left(\frac{\pi x}{2\sqrt{3}}(3n-0)\right)\sin\left(\frac{\pi m}{2}(n-0)\right)\biggr]\\
        &= \sum_{(0,n)}\frac{\hat{\mathcal{P}_sf_s}(0,n)}{2}v_{0,n}(x,y)  \quad\text{by \eqref{eqn:vmn}}\\
        &= \sum_{(0,n)}f_s^{\triangle}(0,n)\mathcal{P}_sv_{0,n}(x,y).
    \end{align*}
    The bold factor of $2$ in the second line arises from the fact that the normalisation constant
    for $\hat{\mathcal{P}_sf_s}(m,n)$, $m\neq 0$, is twice that of $\hat{\mathcal{P}_sf_s}(0,n)$;
    see \eqref{eqn:Psfscoef} and \eqref{eqn:Psfscoef0}.
    The last line follows from \eqref{eqn:Pvmn} and \eqref{eqn:Psfscoefeq}.
    Once again we have \[
        \mathcal{P}_s\left[S^{T_1}_Nf_s\right](x,y) = S^R_N\mathcal{P}_sf_s(x,y),
    \] and we can conclude that $S_N^{T_1}f_s\to f_s$ in $L^p(T_1)$.
\end{proof}

\section{Main Results for the Equilateral Triangle}\label{sec:results}

We now wrap up with the main results as they apply to the original equilateral triangle, $T$.

\begin{theorem}[{\citealp{Prager1998}}]
    The functions \begin{align*}
        &u_{m,n}: \quad m,n = 1,2,\ldots, \quad &&m \equiv n\mod 2, \quad 0<m<n;\\
        &v_{m,n}: \quad m=0,1,2,\ldots, n = 1, 2, \ldots, \quad &&m \equiv n \mod 2, \quad 0 \leq m < n,
    \end{align*} form a complete, orthogonal system on $T$, consisting of eigenfunctions of the Dirichlet Laplacian.
    Furthermore, each eigenfunction, $u_{m,n}$ or $v_{m,n}$, corresponds to the eigenvalue $\pi^2\left(\tfrac{m^2}{3}+n^2\right)$.
\end{theorem}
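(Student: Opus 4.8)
The plan is to reduce everything to the hemiequilateral triangle $T_1$, where the work has already been done in \cref{sec:assym,sec:symm,sec:assymconv,sec:symmconv}, by exploiting the splitting of $L^2(T)$ into functions that are odd, respectively even, in $x$. First I would record the orthogonal direct sum $L^2(T) = L^2_a(T)\oplus L^2_s(T)$: every $f$ splits as $f = f_a + f_s$, and the pointwise product of an odd with an even function is odd and hence integrates to zero over the symmetric domain $T$. From the explicit formulas \eqref{eqn:umn} and \eqref{eqn:vmn} (or from the defining prolongations $\mathcal P_a$, $\mathcal P_s$), each $u_{m,n}$ is odd in $x$ and each $v_{m,n}$ is even in $x$, so $u_{m,n}\in L^2_a(T)$ and $v_{m,n}\in L^2_s(T)$; that these are Dirichlet eigenfunctions on $T$ for the common eigenvalue $\pi^2(\tfrac{m^2}{3}+n^2)$ has essentially been settled already by \cref{cor:eigenfs} and its symmetric counterpart (each of the three terms of \eqref{eqn:umn} and \eqref{eqn:vmn} is an eigenfunction for that eigenvalue, and the vanishing on $\partial T$ follows from \cref{cor:eigenfs} on the part of $\partial T$ lying in $\overline{T_1}$ and from the parity in $x$ on the mirror part).

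Orthogonality on $T$ then comes for free from orthogonality on $T_1$: for distinct admissible indices, $\int_T u_{m,n}u_{k,l} = 2\int_{T_1}u_{m,n}u_{k,l} = 0$ and likewise for the $v$'s by the pairwise-orthogonality lemmas, while $\int_T u_{m,n}v_{k,l}=0$ by the odd-times-even parity argument. The $L^2(T)$ norms are $2\norml{u_{m,n}}{2}{T_1}^2 = \sqrt3$ and, for the $v$'s, $\sqrt3$ (or $2\sqrt3$ when $m=0$), all nonzero; hence the index ranges listed in the theorem are irredundant, since two distinct members of an orthogonal set of nonzero vectors cannot be scalar multiples of one another — this is exactly what excludes the ``collisions'' of \cref{lemma:umnmult,lemma:vmnmult}, which one checks only ever identify an in-range index with an out-of-range one ($0<m<n$ forces $m'>n'$ and $n''<0$, and $v_{0,n}$ is paired only with the out-of-range $v_{3n/2,n/2}$).

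For completeness, suppose $f\in L^2(T)$ is orthogonal to every $u_{m,n}$ and every $v_{k,l}$, and split $f = f_a + f_s$. Since $u_{m,n}\perp f_s$ by parity, $0 = \int_T f u_{m,n} = \int_T f_a u_{m,n} = 2\int_{T_1}f_a u_{m,n}$, so every coefficient $f_a^\triangle(m,n)$ vanishes; then \cref{thm:Lpfa} with $p=2$ gives $f_a = \lim_N S_N^{T_1}f_a = 0$ in $L^2(T_1)$, hence $f_a = 0$ a.e.\ on $T$ by oddness. The symmetric part is disposed of identically using the $v_{k,l}$ and \cref{thm:Lpfs}. Thus $f=0$, and the system is complete. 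The same bookkeeping shows that $S_N^{T_1}f_a + S_N^{T_1}f_s \to f$ in $L^p(T)$ for every $1<p<\infty$, which is the quantitative payoff; I would state that as the corollary closing the section.

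I do not anticipate a real obstacle: the theorem is assembled almost entirely from the preceding lemmas and \cref{thm:Lpfa,thm:Lpfs}. The one point requiring care is not conflating ``$f$ is orthogonal to all $u_{m,n}$'' with ``$f_a$ is orthogonal to all $u_{m,n}$'' — one must project onto the odd and even subspaces before invoking the $L^2$ completeness on $T_1$ that is embedded in \cref{thm:Lpfa,thm:Lpfs}. The only other thing worth a sentence is the no-double-counting check just mentioned, which is a one-line inequality argument.
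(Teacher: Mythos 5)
Your proposal is correct, but note that the paper itself offers no proof of this statement --- it is quoted as a result of Pr\'ager, so there is nothing in the text to compare your argument against line by line. What you have done is assemble a self-contained proof from the paper's own ingredients: the parity splitting $f=f_a+f_s$, the oddness/evenness in $x$ of $u_{m,n}$ and $v_{m,n}$ (which gives $u\perp v$ and converts orthogonality and norms on $T_1$ into those on $T$ with a factor of $2$), the observation that the index identifications in \cref{lemma:umnmult,lemma:vmnmult} only ever pair an in-range index with an out-of-range one, and the $p=2$ case of \cref{thm:Lpfa,thm:Lpfs} to deduce completeness from the vanishing of all coefficients of $f_a$ and $f_s$ separately. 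This is exactly the decomposition the paper deploys for its final $L^p$ theorem, and your one genuinely non-obvious caution --- that one must pass from ``$f\perp u_{m,n}$'' to ``$f_a\perp u_{m,n}$'' via parity before invoking completeness on $T_1$ --- is the right one; I see no gap.
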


As for the $L^p$ theory, we can combine our \cref{thm:Lpfa,thm:Lpfs} to obtain the full result on $L^p(T)$.
As before, we use the following notation to keep track of our indices in the asymmetric and symmetric parts of the sums:
\begin{align*}
    \mathcal{U}_N &:= \left\{(m,n) : 0 < m < n \leq N \text{ and } m \equiv n \mod 2\right\},\\
    \mathcal{V}_N &:= \left\{(m,n) : 0\leq m < n \text{ and } m \equiv n \mod 2\right\}.
\end{align*}
\begin{theorem}
    Let $f\in L^p(T)$ with $1<p<\infty$. With the notation of the previous sections, let \[
        S_N^{T}f := \sum_{(m,n)\in\mathcal{U}_N}f_a^{\triangle}(m,n)u_{m,n}
            + \sum_{(m,n)\in\mathcal{V}_N}f_s^{\triangle}(m,n)v_{m,n},
    \] Then, $S_N^Tf\to f$ in $L^p(T)$.
\end{theorem}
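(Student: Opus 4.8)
The plan is to reduce the statement to the two theorems already proven, \cref{thm:Lpfa} and \cref{thm:Lpfs}, by exploiting the symmetric/antisymmetric decomposition $f = f_a + f_s$ introduced in \cref{sec:assym}. First I would observe that the two partial sums appearing in $S_N^T f$ are, respectively, the antisymmetric extension to $T$ of $S_N^{T_1} f_a$ and the symmetric extension to $T$ of $S_N^{T_1} f_s$: indeed, each $u_{m,n}$ is antisymmetric in $x$ (it is built from the prolongation $\mathcal{P}_a$ which changes sign under reflection in $x=0$), each $v_{m,n}$ is symmetric in $x$, and by construction their restrictions to $T_1$ are the eigenfunctions featuring in $S_N^{T_1} f_a$ and $S_N^{T_1} f_s$. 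Note that $\mathcal{V}_N$ as defined in the statement has no explicit $n \leq N$ cutoff; I would either read this as a typo for $\{0 \le m < n \le N,\ m \equiv n \bmod 2\}$ or remark that the finitely-many-term convention matching $S_N^{T_1} f_s$ is intended, so that the two sums are genuinely the $N$-th partial sums on $T_1$, extended.

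Next I would make the norm bookkeeping precise. For a function $g$ on $T_1$, let $E_a g$ and $E_s g$ denote its antisymmetric and symmetric extensions to $T = T_1 \cup (-T_1)$ (reflecting in $x=0$); these satisfy $\norml{E_a g}{p}{T}^p = \norml{E_s g}{p}{T}^p = 2\norml{g}{p}{T_1}^p$, since the reflection is an isometry. Restricting $f$ to $T_1$, we have $f|_{T_1} = f_a|_{T_1} + f_s|_{T_1}$ by definition of $f_a, f_s$, and $E_a(f_a|_{T_1}) = f_a$, $E_s(f_s|_{T_1}) = f_s$ on $T$, so $f = E_a(f_a|_{T_1}) + E_s(f_s|_{T_1})$ almost everywhere on $T$. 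Likewise $S_N^T f = E_a\bigl[S_N^{T_1} f_a\bigr] + E_s\bigl[S_N^{T_1} f_s\bigr]$, using linearity of the extension operators and the identification of the two sums above.

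Then the estimate is a two-line triangle-inequality argument:
\begin{align*}
    \norml{S_N^T f - f}{p}{T}
    &\leq \norml{E_a\bigl[S_N^{T_1} f_a - f_a|_{T_1}\bigr]}{p}{T}
        + \norml{E_s\bigl[S_N^{T_1} f_s - f_s|_{T_1}\bigr]}{p}{T}\\
    &= 2^{1/p}\norml{S_N^{T_1} f_a - f_a}{p}{T_1}
        + 2^{1/p}\norml{S_N^{T_1} f_s - f_s}{p}{T_1} \to 0
\end{align*}
as $N \to \infty$, where the convergence of each term is exactly the content of \cref{thm:Lpfa} and \cref{thm:Lpfs}. (For $p < 2$ one could alternatively use $\norm{\cdot}^p$ subadditivity, but the plain triangle inequality works for all $1 < p < \infty$.)

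The main obstacle is not analytic but organizational: verifying that the two index sets $\mathcal{U}_N$ and $\mathcal{V}_N$ in the definition of $S_N^T f$ really do reproduce $S_N^{T_1} f_a$ and $S_N^{T_1} f_s$ exactly — in particular making sure the $v_{0,n}$ terms are handled with the right normalisation (the factor-of-two discrepancy between $\hat{\mathcal{P}_sf_s}(0,n)$ and $\hat{\mathcal{P}_sf_s}(m,n)$ for $m \neq 0$, already flagged in the proof of \cref{thm:Lpfs}) and that the truncation ranges match. Once the extension operators $E_a, E_s$ are set up and seen to be isometric (up to the constant $2^{1/p}$) and to commute with the partial sums, the rest is immediate; I would keep the proof short and simply cite \cref{thm:Lpfa,thm:Lpfs} for the vanishing of each summand.
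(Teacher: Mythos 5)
Your proposal is correct and follows essentially the same route as the paper: decompose $f=f_a+f_s$, use the factor-of-$2^{1/p}$ norm identity between $T$ and $T_1$, apply the triangle inequality, and invoke \cref{thm:Lpfa,thm:Lpfs}; your explicit extension operators $E_a,E_s$ merely make the paper's implicit identification $[S_N^Tf]_\nu=S_N^{T_1}f_\nu$ more formal. Your observation that $\mathcal{V}_N$ is missing the cutoff $n\leq N$ is a fair catch of a typo, but does not change the argument.
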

\begin{proof}
    For any function $f$ on $T$, write its decomposition into a symmetric and antisymmetric part with respect to the $y$-axis: \[
        f = f_a + f_s.
    \] Then clearly \[
        \norml{f_\nu}{p}{T}^p = 2\norml{f_\nu}{p}{T_1}^p \qtext{for} \nu = a,s,
    \] whence \[
        \norml{f}{p}{T} \leq 2^{1/p}\left(\norml{f_a}{p}{T_1}+\norml{f_s}{p}{T_1}\right).
    \] Hence, since \[
        \left[S_N^Tf\right]_{\nu} = S_{N}^{T_1}f_{\nu} \qtext{for} \nu = a,s,
    \] it follows that \[
        \norml{S_{N}^Tf-f}{p}{T} \leq 2^{1/p}\left(\norml{S_N^{T_1}f_a-f_a}{p}{T_1}
            +\norml{S_N^{T_1}f_s-f_s}{p}{T_1}\right) \to 0
    \] by \cref{thm:Lpfa,thm:Lpfs}.
\end{proof}
\section{Concluding remarks}\label{sec:conc}

Our argument made use of Pr\'ager's cunning triangle-to-rectangle transformation
in order to reduce the convergence problem on the triangle to the well-known convergence on the rectangle.
In a similar, but more direct way, we were able to obtain these results for the 45-90-45 triangle.

There are, however, limitations to this approach, as a consequence of the following theorem due to \cite{Lame1833}
and reported as Theorem 3.1 in \cite{McCartin2003}.
\begin{theorem}[Lam\'e's Fundamental Theorem]
    Suppose that $f(x,y)$ can be represented by the trigonometric series \[
        f(x,y) = \sum_{i}A_i\sin(\lambda_ix+\mu_iy+\alpha_i) + B_i\cos(\lambda_ix+\mu_iy+\beta_i)
    \] with $\lambda_i^2+\mu_i^2=k^2$.
    Then $f$ is antisymmetric about any line about which it vanishes.
\end{theorem}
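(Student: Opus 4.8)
The plan is to reduce everything to the structure of the frequency vectors appearing in the trigonometric representation. Write each summand as $A_i\sin(\phi_i)+B_i\cos(\psi_i)$ where $\phi_i = \lambda_i x + \mu_i y + \alpha_i$ and $\psi_i = \lambda_i x + \mu_i y + \beta_i$, and set $v_i = (\lambda_i,\mu_i)$, so that $|v_i| = k$ for every $i$. Suppose $\ell$ is a line along which $f$ vanishes identically; after a rigid motion of the plane (which only rotates the vectors $v_i$ and shifts the phases $\alpha_i,\beta_i$, preserving $|v_i|=k$ and the antisymmetry statement), we may assume $\ell$ is the $x$-axis, i.e. $\{y=0\}$. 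Antisymmetry about $\ell$ then means $f(x,-y) = -f(x,y)$, so the first step is to show that the hypothesis $f(x,0)\equiv 0$ forces this relation.

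The key step is to combine each term with its ``mirror'' term. Reflecting $y\mapsto -y$ sends the frequency vector $(\lambda_i,\mu_i)$ to $(\lambda_i,-\mu_i)$, which again has modulus $k$; so the set of frequency vectors occurring in $f(x,-y)$ is the reflection of the original set. The idea is that $f(x,0)\equiv 0$ is an identity among functions of $x$ alone, i.e. among exponentials $\e^{\i\lambda_i x}$, and one groups the terms of $f$ according to the value of $\lambda_i$ (the $x$-component of the frequency). For a fixed value $\lambda$, the vectors on the circle of radius $k$ with that first coordinate are exactly $(\lambda,\mu)$ and $(\lambda,-\mu)$ with $\mu = \sqrt{k^2-\lambda^2}$ (or the single vector $(\lambda,0)$ when $\lambda = \pm k$). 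Restricting to $y=0$ kills the $y$-dependence, and linear independence of the distinct characters $\e^{\i\lambda x}$ (for the finitely many, or countably many under a suitable grouping, distinct values of $\lambda$) forces, within each $\lambda$-group, the $y=0$ contributions to cancel. One then checks that this cancellation constraint on the coefficients and phases is *exactly* the condition that the $(\lambda,\mu)$-term and the $(\lambda,-\mu)$-term are negatives of each other under $y\mapsto -y$; for the degenerate group $\lambda=\pm k$ (vector $(\pm k,0)$), the constraint simply says that term vanishes, hence is trivially antisymmetric in $y$. Summing over all $\lambda$-groups gives $f(x,-y) = -f(x,y)$.

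I expect the main obstacle to be the bookkeeping in the grouping argument, in particular handling the phase shifts $\alpha_i,\beta_i$ cleanly (rewriting $A\sin(\lambda x+\mu y+\alpha)$ in the basis $\cos(\mu y),\sin(\mu y)$ with $x$-dependent coefficients, and matching this against the $\mu\mapsto-\mu$ term) and justifying the term-by-term manipulation and linear independence of characters when the index set $\{i\}$ is infinite — one wants the series to converge in a sense (e.g. uniformly, or in $L^2$ on a period) strong enough to extract coefficients. A secondary point is the degenerate case $\mu_i = 0$, where the ``reflected'' term coincides with the original and the condition becomes that the term itself vanishes on $\ell$; this needs to be stated separately but is straightforward. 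Modulo these routine-but-fiddly points, the heart of the matter is the single observation that $|v_i|=k$ pins the reflected frequency back onto the same circle, so the only freedom left is the sign of $\mu_i$, and vanishing on $\ell$ ties those two signs together antisymmetrically.
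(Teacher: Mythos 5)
The paper does not actually prove this statement: it is quoted from Lam\'e (1833) via McCartin (2003) and used only in the concluding remarks to explain why the reflection trick stops at these three triangles, so there is no in-paper argument to compare yours against. Your proposal is, however, essentially the standard proof, and it isolates the right mechanism. After moving the nodal line to $\{y=0\}$, the constraint $\lambda_i^2+\mu_i^2=k^2$ forces each fibre $\{(\lambda,\mu):\lambda^2+\mu^2=k^2\}$ over a fixed $\lambda$ to contain at most the two points $(\lambda,\pm\sqrt{k^2-\lambda^2})$, so that vanishing of the $\e^{\i\lambda x}$-coefficient of $f(\cdot,0)$ is \emph{literally} the statement that the two coefficients in that fibre are negatives of one another, i.e.\ that the fibre's contribution is odd in $y$; summing over $\lambda$ gives $f(x,-y)=-f(x,y)$. (In complex-exponential form: writing $f=\sum_v c_v\e^{\i v\cdot(x,y)}$ with $\abs{v}=k$, the hypothesis gives $c_{(\lambda,\mu)}+c_{(\lambda,-\mu)}=0$ for every $\lambda$, which is exactly the antisymmetry relation; the degenerate fibre $(\pm k,0)$ forces $c_{(\pm k,0)}=0$, as you note.) This is precisely where the hypothesis $\abs{v_i}=k$ is used: a fibre with three or more points would not reduce ``sum equals zero'' to pairwise cancellation.

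The one genuine gap is the one you flag yourself: extracting coefficients from $f(x,0)\equiv 0$ requires linear independence of the characters $\e^{\i\lambda x}$ in a sense strong enough for the given mode of convergence. For infinitely many, possibly accumulating, frequencies this is a nontrivial uniqueness statement for non-harmonic trigonometric series and needs a convergence hypothesis that the theorem, as stated, does not supply. For the paper's application the combinations are finite, the functions are real-analytic (so vanishing on a segment of a line already gives vanishing on the whole line), and your argument closes completely; if you write it up, state the finite (or uniformly convergent) case explicitly and handle the $\mu=0$ fibre separately as you indicate.
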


\begin{figure}[ht]
    \captionsetup[subfigure]{font=footnotesize}
    \centering
    \subcaptionbox{In general, the diagonal is not a line of\\symmetry, so it cannot be a nodal line.}[.5\textwidth]{%
    \includegraphics{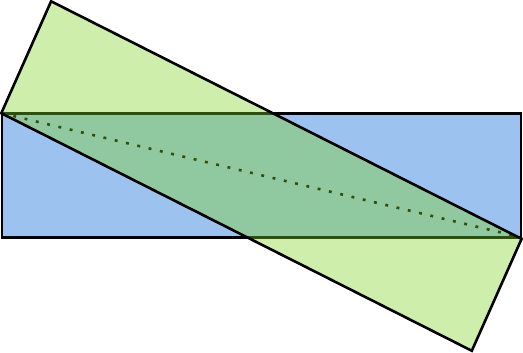}}%
    \subcaptionbox{Mimicking the Pr\'ager construction with an angle of $\pi/2n$ ($n\neq2,3$) does not tile a rectangle.}[.5\textwidth]{%
    \includegraphics{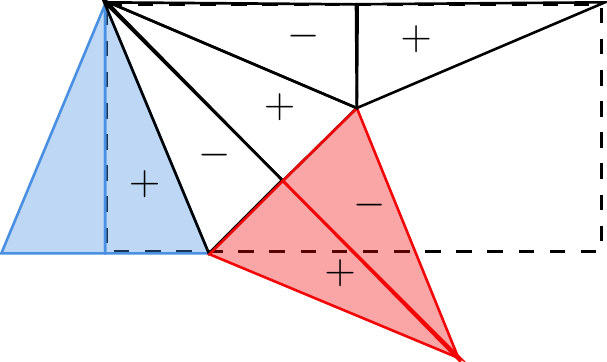}}
    \caption{Limitations of the triangle-to-rectangle constructions.} 
    \label{fig:extensions}
\end{figure}

The implication for our argument is that, assuming Dirichlet boundary conditions on the hypothenuse of $T_1$,
we will generate a nodal (\ie vanishing) line along the diagonal of the rectangle (see, \eg, \cref{fig:t1arefl}).
Lam\'e's Theorem then requires the eigenfunctions to have a line of anti-symmetry along the diagonal.
The only possibilities are symmetry along the diagonal line itself, as in the square (see \cref{fig:extensions},(a));
or an arrangement of smaller triangles inside the rectangle
as in Pr\'{a}ger's construction with three hemiequilateral triangles.
In this case, the upper right-angle of the rectangle is cut into three angles of $\pi/6$.
Attempts to replace this decomposition by $n>3$ triangles with angles $\pi/2n$
will not tile a rectangle (see \cref{fig:extensions},(b)).

A more general treatment of eigenfunction expansions for the Dirichlet Laplacian in arbitrary triangular domains
requires further research.

\section*{Acknowledgments}
I owe a special thanks to my doctoral advisor, Prof James C. Robinson,
for many insightful discussions and his helpful criticism of early drafts.
This work was supported by the EPSRC/EP/V520226/12443915 studentship and the Warwick Mathematics Institute.

\bibliography{bibliography}

\end{document}